\documentclass[a4paper,12pt]{amsart}
\usepackage[latin1]{inputenc}
\usepackage[francais]{babel}
\usepackage[T1]{fontenc}
\usepackage{latexsym}
\usepackage{amsmath,amsthm}
\usepackage{amsfonts}
\usepackage{epsfig}
\usepackage{amssymb}
\usepackage{amscd}
\usepackage[all]{xy}
\usepackage{stmaryrd}
\usepackage{rotating}
\input xy

\usepackage{hyperref}

\hyphenation{}

\newtheorem{Lemma}{Lemme}[section]
\newtheorem{Theo}[Lemma]{Théorème}

\newtheorem{Pro}[Lemma]{Proposition}

\theoremstyle{definition}
\newtheorem{Def}[Lemma]{Définition}
\newtheorem{Example}[Lemma]{Exemple}

%%%%%% Pour l'introduction %%%%%%%%%%%%%%%%%%%%%%

\theoremstyle{definition}

\theoremstyle{remark}

%%%%%%%%%%%%%%%%%%%%%%%%%%%%%%%%%%%%%%%%%%%%%%%%%%%%%%%%%%%%%%%%%

\theoremstyle{remark}
\newtheorem{Rem}[Lemma]{Remarque}

\newcommand{\NN}{\mathbb{N}}
\newcommand{\RR}{\mathbb{R}}
\newcommand{\CC}{\mathbb{C}}
\newcommand{\ZZ}{\mathbb{Z}}

\newcommand{\G}{\rtimes^{\rho} G}
\newcommand{\rG}{\rtimes^{\rho}_{r} G}

\newcommand{\End}{\mathrm{End}}
\numberwithin{equation}{section}
\newcommand{\E}{\overline{E}}
\newcommand{\A}{\mathcal{A}^{\rho}_r(G)}

\newcommand{\Amax}{\mathcal{A}^{\rho}(G)}

\newcommand{\EEnd}{\mathrm{End}}

\title[Morphisme de Baum-Connes tordu]{Morphisme de Baum-Connes tordu par une représentation non unitaire}

%    Information for first author
\author{Maria Paula Gomez-Aparicio}
%    Address of record for the research reported here

\address{Maria Paula {\sc Gomez-Aparicio} : Institut de Mathématiques de Jussieu, Projet d'algèbres d'Opérateurs et représentations,
175 rue du chevaleret, 75013 Paris, France.}

\email{gomez@math.jussieu.fr}

\keywords{Non-unitary representations, Banach algebras, Baum-Connes conjecture}
\subjclass[2000]{22D12, 22D15, 46L80, 19K35}
%%% Clear Header` %%%%%%%%%%%%%%%%%%%%%%%%%%%%%%%%%%%%%%%%%%%%%%%%%%%%%%%%%%%%%%%%%%
% Clear Header Style on the Last Empty Odd pages

%\makeatletter
%\def\cleardoublepage{\clearpage\if@twoside \ifodd\c@page\else%
 %   \hbox{}%
  %  \thispagestyle{empty}%              % Empty header styles
  %  \newpage%
  %  \if@twocolumn\hbox{}\newpage\fi\fi\fi}
%\makeatother

%%%%%%%%%%%%%%%%%%%%%%%%%%%%%%%%%%%%%%%%%%%%%%%%%%%%%%%%%%%%%%%%%%%%%%%%%%%%%%%

\date{}

\begin{document}
\maketitle

\begin{abstract}
Soit $G$ un groupe localement compact et $\rho$ une représentation de dimension finie de $G$ non unitaire. On définit des algèbres de Banach analogues aux $C^*$-algèbres de groupe, $C^*(G)$ et $C^*_r(G)$, en considérant l'ensemble des représentations de la forme $\rho\otimes\pi$, où $\pi$ parcourt un ensemble de représentations unitaires de $G$. On calcule la $K$-théorie de ces algèbres pour une large classe de groupes vérifiant la conjecture de Baum-Connes.\\

\vspace{2mm}
\noindent{\sc {\large a}bstract.} Let $G$ be a locally compact group and $\rho$ a non-unitary finite dimensional representation of $G$. We consider tensor
products of $\rho$ by some unitary representations of $G$ in order to define two Banach algebras analogous to the group $C^*$-algebras, $C^*(G)$ and $C^*_r(G)$. We calculate the $K$-theory of such algebras for a large class of groups satisfying the Baum-Connes conjecture.

\end{abstract}

% \abstract Etant donnés un groupe localement compact $G$ et une représentation de dimension finie de $G$, $(\rho,V)$, nous calculons la $K$-théorie des algèbres de groupes tordues $\Amax$ et $\A$, définies dans \cite{Gomez07} en considérant les représentations de $G$ de la forme $\rho\otimes\pi$, où $\pi$ parcourt un ensemble de représentations unitaires de $G$, pour une large classe de groupes vérifiant la conjecture de Baum-Connes.

\tableofcontents

\section*{Introduction}
Soit $G$ un groupe localement compact et $\rho$ une représentation de dimension finie de $G$. Dans \cite{Gomez07}, nous avons défini un analogue tordu par $\rho$ de la $C^*$-algèbre maximale de $G$, que l'on note $\mathcal{A}^{\rho}(G)$, en considérant la complétion de l'espace des fonctions continues à support compact sur $G$, que l'on note $C_c(G)$, pour la norme
$$\|f\|=\sup\limits_{(\pi,H)}\|(\pi\otimes\rho)(f)\|_{\mathcal{L}(H\otimes V)},$$
pour $f\in C_c(G)$ et où le supremum est pris parmi les représentations unitaires de $G$. Ces algèbres de groupe \emph{tordues} sont des algèbres de Banach; ce sont des $C^*$-algèbres si et seulement si $\rho$ est unitaire. Elles apparaissaient alors de façon très naturelle dans l'étude du comportement de $\rho$ dans l'ensemble des représentations de $G$ de la forme $\rho\otimes\pi$, avec $\pi$ unitaire. En effet, nous avons alors défini un renforcement de la propriété (T) de Kazhdan \cite{Kazhdan} en termes d'idempotents dans $\Amax$ qui nous a permis de montrer que, pour la plupart des groupes de Lie semi-simples réels ayant la propriété (T), toute représentation irréductible de dimension finie $\rho$ est isolée dans l'ensemble des représentations de la forme $\rho\otimes\pi$, où $\pi$ parcourt l'ensemble des représentations unitaires et  irréductibles de $G$.\\

 D'autre part, dans le même article, nous avons aussi défini un analogue tordu de la $C^*$-algèbre réduite de $G$, noté $\mathcal{A}_r^{\rho}(G)$, en considérant la norme sur $C_c(G)$ donnée par la formule
$$\|f\|=\|(\lambda_{G}\otimes\rho)(f)\|_{\mathcal{L}(L^{2}(G)\otimes V)},$$
où $\lambda_{G}$ est la représentation régulière gauche de $G$. Nous avons alors montré que si le groupe $G$ est non-compact et a la propriété (T) tordue définie dans \cite{Gomez07}, ces deux algèbres tordues n'ont pas la même $K$-théorie. Lorsque $\rho$ est unitaire, ceci est un résultat classique: dans ce cas les algèbres tordues coïncident avec les $C^*$-algèbres de groupe $C^*(G)$ et $C^*_r(G)$, respectivement, et la propriété (T) tordue coïncide avec la propriété (T) de Kazhdan. C'est un résultat connu qui dit que si un groupe non-compact $G$ a la propriété (T), alors $C^*(G)$ et $C^*_r(G)$ n'ont pas la même $K$-théorie, ce qui a d'ailleurs constitué pendant longtemps une barrière pour la vérification de la conjecture de Baum-Connes pour des groupes infinis discrets ayant la propriété (T) (cf. \cite{Julg97}).\\

Le but de cet article est de calculer la $K$-théorie des algèbres tordues pour une large classe de groupes vérifiant la conjecture de Baum-Connes.  Pour ceci, nous allons construire deux morphismes \emph{tordus}, $\mu_{\rho}$ et $\mu_{\rho,r}$, qui vont du membre de gauche du morphisme de Baum-Connes dans la $K$-théorie des algèbres tordues et qui coïncident avec les morphismes de Baum-Connes classiques, $\mu$ et $\mu_r$, si $\rho$ est unitaire. Les algèbres tordues étant des algèbres de Banach, notre outil principal sera la $KK$-théorie banachique de Lafforgue (cf. \cite{Lafforgue02}). Nous allons alors montrer que les algèbres tordues se comportent de la même façon que $C^*(G)$ et $C^*_r(G)$ au niveau de la $K$-théorie.\\

 On rappelle que la conjecture de Baum-Connes propose une façon de calculer la $K$-théorie de $C^*(G)$ pour tout groupe localement compact $G$ (cf. \cite{Baum-Connes-Higson}). Plus précisément, Baum, Connes et Higson ont défini un morphisme d'assemblage $$\mu_r:K^{\mathrm{top}}(G)\rightarrow K(C_r^*(G)),$$
 où $K^{\mathrm{top}}(G)$ est la $K$-homologie $G$-équivariante à support compact du classifiant universel pour les actions propres de $G$, noté $\underline{E}G$. Ce morphisme, appelé désormais \emph{morphisme de Baum-Connes}, peut être défini à l'aide de la $KK$-théorie équivariante de Kasparov (cf. \cite{Kasparov88}). La conjecture de Baum-Connes affirme que $\mu_r$ est un isomorphisme pour tout groupe localement compact $G$.\\

 La méthode la plus puissante pour montrer la conjecture de Baum-Connes, appelée de façon générale méthode du ``dual Dirac-Dirac'' a été introduite par Kasparov dans son preprint de 1981 (publié après dans \cite{Kasparov88}) pour démontrer la conjecture de Novikov dans le cas des variétés dont le groupe fondamental est un sous-groupe discret d'un groupe de Lie connexe. Elle a été ensuite énoncée dans une forme très générale par Tu (cf. \cite{Tu99}) qui consiste à construire un élément de ``Dirac'' $d$ dans $KK_G(A,\CC)$ et un élément ``dual-Dirac'' $\eta$ dans $KK_G(\CC,A)$, pour $A$ une $G$-$C^*$-algèbre propre, tels que, si on considère l'élément de $KK_G(\CC,\CC)$ défini par la formule $\gamma:=\eta\otimes_A d$, où $\otimes_A$ denote le produit de Kasparov au-dessus de $A$, alors $\gamma$ doit agir par l'identité sur $K^{\mathrm{top}}(G)$; plus précisément, on demande que $p^*(\gamma)=1$ dans $KK_{G\ltimes \underline{E}G}(C_0(\underline{E}G),C_0(\underline{E}G))$, où $p:\underline{E}G\rightarrow \{pt\}$ est la projection de $\underline{E}G$ sur le point. Un élément $\gamma$ avec ces propriétés est appelé ``élément $\gamma$ de Kasparov''. Tu a montré que si un élément $\gamma$ de Kasparov existe, alors le morphisme de Baum-Connes $\mu_r$ est injectif. Si de plus $\gamma=1$ dans $KK_G(\CC,\CC)$, alors $\mu_r$ est un isomorphisme.

Par ailleurs, on peut aussi construire un morphisme $$\mu:K^{\mathrm{top}}(G)\rightarrow K(C^*(G)),$$
(cf. \cite{Baum-Connes-Higson}). Les résultats de Tu impliquent que s'il existe un élément $\gamma=1$ dans $KK_G(\CC,\CC)$ comme ci-dessus, alors $\mu$ est aussi un isomorphisme (cf. \cite{Tu99}).\\

 Dans \cite{Kasparov88}, Kasparov a utilisé la méthode originale pour montrer l'injectivité de $\mu_r$ (et donc la conjecture de Novikov) pour tout groupe de Lie semi-simple et pour tout sous-groupe fermé d'un groupe de Lie semi-simple. Depuis, un élément $\gamma$ de Kasparov a été construit, par exemple, par Kasparov et Skandalis et puis par Higson et Kasparov, pour une classe très vaste de groupes, notée $\mathcal{C}$ dans \cite{Lafforgue02}. Nous rappelons ici, par souci de commodité pour le lecteur, que cette classe est constituée par les groupes suivants:\\

\begin{itemize}
 \item les groupes localement compacts agissant de façon continue, propre et isométrique sur une variété riemannienne complète simplement connexe à courbure sectionnelle négative ou nulle (cf. \cite{Kasparov88}), ou sur un immeuble de Bruhat-Tits affine (cf. \cite{Kasparov-Skandalis91}),
\item les groupes discrets agissant proprement et par isométries sur un espace métrique faiblement géodésique, faiblement bolique et de géométrie co-uniforme bornée (cf. \cite{Kasparov-Skandalis03} et \cite{Tzanev00} pour la terminologie co-uniforme),
\item les groupes localement compacts a-T-menables, c'est-à-dire qui agissent de façon affine, isométrique et propre sur un espace de Hilbert (cf. \cite{Higson-Kasparov}).\\
\end{itemize}
La classe $\mathcal{C}$ contient, en particulier, tous les groupes moyennables, tous les groupes hyperboliques au sens de Gromov et tous les groupes $p$-adiques.\\
Dans \cite{Julg-Kasparov}, Julg et Kasparov ont aussi prouvé l'égalité $\gamma=1$ dans $KK_G(\CC,\CC)$, et donc la bijectivité du morphisme de Baum-Connes, pour $SU(n,1)$. Higson et Kasparov ont ensuite généralisé leur résultat pour tous les groupes a-T-menables (cf. \cite{Higson-Kasparov}).\\

Revenons maintenant aux algèbres de groupe tordues. Pour tout groupe localement compact (et dénombrable à l'infini) et pour toute représentation $\rho$ de dimension finie, nous allons construire deux morphismes
$$\mu_{\rho}:K^{\mathrm{top}}(G)\rightarrow K(\Amax)\quad\text{et}\quad\mu_{\rho,r}:K^{\mathrm{top}}(G)\rightarrow K(\A).$$
Nous allons ensuite montrer que $\mu_{\rho}$ et $\mu_{\rho,r}$ sont des isomorphismes pour tout groupe localement compact pour lequel il existe un élément $\gamma$ de Kasparov qui est égal à $1$ dans $KK_G(\CC,\CC)$. Plus précisément, nous allons montrer les deux théorèmes suivants. On rappelle que $$K^{\mathrm{top}}(G)=\lim\limits_{\longrightarrow}KK_G(C_0(Y),\CC),$$où la limite inductive est prise parmi les parties $Y$ $G$-compactes de $\underline{E}G$. 
\begin{Theo}
\sloppy Supposons que pour toute partie $G$-compacte $Y$ de $\underline{E}G$, il existe une $G$-$C^*$-algèbre propre $B$ et $\eta\in KK_G(\CC,B)$ et $d\in KK_G(B,\CC)$ tels que $\gamma=\eta\otimes_B d\in KK_G(\CC,\CC)$ vérifie $p^*(\gamma)=1$ dans $KK_{G\ltimes Y}(C_0(Y),C_0(Y))$, où $p$ est la projection de $Y$ vers le point, si bien que $\gamma$ agisse par l'identité sur $K^{\mathrm{top}}(G)$. Alors, pour toute représentation $\rho$ de dimension finie, les morphismes $\mu_{\rho}$ et $\mu_{\rho,r}$ sont injectifs.
\end{Theo}

\begin{Theo}
Soit $G$ un groupe localement compact tel qu'il existe une $G$-$C^*$-algèbre propre $B$, et des éléments $\eta\in KK_G(\CC,B)$ et $d\in KK_G(B,\CC)$
 tels que si on pose $\gamma=\eta\otimes_B d\in KK_G(\CC,\CC)$ on a $\gamma=1$. Alors, pour toute
 représentation $\rho$ de dimension finie de $G$, $\mu_{\rho}$ et $\mu_{\rho,r}$ sont des isomorphismes.
\end{Theo}

Ceci implique en particulier que $\mu_{\rho}$ et $\mu_{\rho,r}$ sont injectifs pour tout groupe appartenant à la classe $\mathcal{C}$ et ce sont des isomorphismes pour tout groupe a-T-menable. Dans ce cas, les algèbres $\Amax$, $\A$, $C^*(G)$ et $C^*_r(G)$ ont donc toutes la même $K$-théorie.\\

Dans un autre article (cf. \cite{Gomez08-2}), qui fait partie de \cite{GomezThese}, nous montrerons que $\mu_{\rho,r}$ est un isomorphisme pour tout groupe ayant la propriété (RD) et appartenant à une sous-classe de $\mathcal{C}$, notée $\mathcal{C'}$ par Lafforgue (cf. \cite[Introduction]{Lafforgue02}). En particulier, on obtiendra la bijectivité du morphisme $\mu_{\rho,r}$ pour tout groupe de Lie réductif réel, pour tout groupe hyperbolique et pour tous les sous-groupes discrets et cocompacts de $SL_3(F)$, où $F$ est un corps local, de $SL_3(\mathbb{H})$ et de $E_{6(-26)}$ (cf. \cite{Lafforgue00}, \cite{Chatterji03}). Le morphisme de Baum-Connes tordu $\mu_{\rho,r}$ est alors un isomorphisme pour la plupart des groupes pour lesquels on sait montrer que le morphisme de Baum-Connes classique l'est. De plus, la bijectivité du morphisme tordu ne semble pas être plus facile à démontrer que la conjecture de Baum-Connes elle-même, l'algèbre tordue $\A$, à différence des complétions inconditionnelles introduites par Lafforgue, n'étant pas plus stable que $C^*_r(G)$ par le produit de Schur (\cite{LafforgueICM02}). Cependant, on va montrer (cf. propostion \ref{petites}) que les algèbres tordues peuvent être très ``petites'', c'est-à-dire contenues dans des algèbres $L^1$ qui sont des complétions inconditionnelles (cf. \cite{Lafforgue02}). Ceci nous fait croire que la conjecture de Baum-Connes est fortement liée à la bijectivité du morphisme tordu et on peut espérer que les deux soient vérifiées toujours au même temps. On rappelle que Higson, Lafforgue et Skandalis dans \cite{Higson-Lafforgue-Skandalis} ont donné un contre-exemple à la généralisation de la conjecture de Baum-Connes aux actions de groupe (connue comme la conjecture de Baum-Connes à coefficients), ce qui nous laisse penser que le morphisme tordu ne doit pas être un isomorphisme pour tous les groupes localement compacts; mais on peut espèrer que ça soit le cas pour tous les groupes de la classe $\mathcal{C}$.\\
Dans le cas des groupes abéliens, les algèbres tordues avaient déjà étés considérées par Bost dans le cadre du principe d'Oka (cf. \cite{Bost90}).

\medskip
\noindent{\bf Remerciements.} Ce travail fait partie des travaux présentés pour l'obtention de mon Doctorat réalisé sous la direction de Vincent Lafforgue. Je tiens à le remercier pour sa grande disponibilité et ses suggestions. Je remercie aussi Georges Skandalis pour ses éclaircissements et ses conseils et Hervé Oyono-Oyono pour ses commentaires.

\section{Algèbres de groupe tordues}\label{produitcroisé}

\subsection{Définitions et propriétés principales}
Soit $G$ un groupe localement compact et soit $dg$ une mesure de Haar à gauche sur $G$. On note $\Delta$ la fonction modulaire de $G$ (c'est-à-dire que $dg^{-1}=\Delta(g)^{-1}dg$ pour tout $g\in G$).\\
 Soit $A$ une $G$-$C^*$-algèbre. Pour tout $g\in G$ et pour tout $a\in A$, on note $g.a$, ou $g(a)$, l'action de $g$ sur $a$. On considère alors l'espace vectoriel des fonctions continues à support compact sur $G$ et à valeurs dans $A$, que l'on note $C_c(G,A)$, muni de la structure d'algèbre involutive dont la multiplication est donnée par
$$(f_1*f_2)(g)=\int_Gf_1(g_1)g_1(f_2(g_1^{-1}g))dg_1,$$ pour $f_1,f_2\in C_c(G,A)$ et l'involution par
$$f^*(g)=g(f(g^{-1}))^*\Delta(g^{-1}),$$ pour $f\in C_c(G,A)$ et $g\in G$.
 De façon générale, on représente tout élément $f$ de $C_c(G,A)$ par l'intégrale formelle $\int_Gf(g)e_gdg$, où $e_g$ est une lettre formelle satisfaisant le conditions suivantes
$$e_ge_{g'}=e_{gg'},\quad e_g^*=(e_g)^{-1}=e_{g^{-1}}\quad\text{et}\quad e_gae_g^*=g.a,$$ pour tous $g,g'\in G$ et pour tout $a\in A$.

On note $C^*(G,A)$ et $C^*_r(G,A)$ les produits croisés, maximal et réduit respectivement, de $G$ et $A$. De plus, on note  
$$ L^2(G,A)=\{f\in C_c(G,A)|\int_G f(g)^*f(g)dg\,\,\text{converge dans}\, A\},$$
et $\lambda_{G,A}$ la représentation régulière gauche de $C_c(G,A)$ dans $L^2(G,A)$
donnée par la formule $$\lambda_{G,A}(f)(h)(t)=\int_Gt^{-1}(f(s))h(s^{-1}t)ds,$$ pour $f\in C_c(G,A)$, $h\in L^2(G,A)$ et $t\in G$. On rappelle que $\lambda_{G,A}$ induit un unique morphisme de $C^*$-algèbres de $C^*(G,A)$ dans $C^*_r(G,A)$ que l'on note encore $\lambda_{G,A}$ par abus de notation.\\

 Tout le long de l'article, une représentation $\rho$ de dimension finie de $G$ sera une représentation de $G$ sur un espace vectoriel complexe de dimension finie, muni d'une structure hermitienne. On note $(\rho,V)$ toute représentation de $G$ sur un espace $V$ pour simplifier les notations et quand on veut préciser l'espace sur lequel $G$ agit. Le produit tensoriel de $C^*$-algèbres sera toujours le produit tensoriel minimal.\\
 
 Soit $(\rho,V)$ une représentation de dimension finie de $G$. On considère alors l'application
\begin{align*}
 C_c(G,A)&\rightarrow C_c(G,A)\otimes\End(V)\\
\int_Gf(g)e_gdg&\mapsto\int_Gf(g)e_g\otimes\rho(g)dg.
\end{align*}
Elle nous permet de donner la définition de \emph{produits croisés tordus} suivante. Soit $C^*(G,A)\otimes \End(V)$ (resp.$C^*_r(G,A)\otimes\End(V)$) le produit tensoriel minimal de $C^*$-algèbres.
\begin{Def}
Le \emph{produit croisé} \emph{tordu par $\rho$} (resp. \emph{produit croisé tordu réduit}), noté $A\rtimes^{\rho} G$ (resp. $A\rtimes^{\rho}_r G$), est le complété-séparé de $C_c(G,A)$
pour la norme
$$\|\int_Gf(g)e_gdg\|_{A\G}=\|\int_Gf(g)e_g\otimes\rho(g)dg\|_{C^*(G,A)\otimes\End(V)},$$
(resp. $\|.\|_{C^*_r(G,A)\otimes\End(V)}$).\\
Si $A=\CC$, alors on note $$\mathcal{A}^{\rho}(G):=\CC\rtimes^{\rho}G\quad\hbox{et}\quad\mathcal{A}^{\rho}_r(G):=\CC\rtimes_r^{\rho}G.$$
\end{Def}

On a alors la proposition suivante

\begin{Pro}
 Les produits croisés tordus $A\G$ et $A\rG$ sont des algèbres de Banach. Ce sont des $C^*$-algèbres si et seulement si $\rho$ est une représentation unitaire. Dans ce cas, $A\rtimes^{\rho}G=C^*(G,A)$ et  $A\rtimes_r^{\rho}G=C_r^*(G,A)$, à équivalence de norme près.
\end{Pro}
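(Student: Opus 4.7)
La preuve proc�de en trois �tapes. Pour la structure d'alg�bre de Banach, je v�rifierais d'abord, par un calcul direct utilisant la multiplicativit� de $\rho$, que l'application
\[
\Phi : C_c(G,A) \to C^*(G,A) \otimes \End(V), \quad f \mapsto \int_G f(g) \, e_g \otimes \rho(g) \, dg,
\]
est un morphisme d'alg�bres : on a $\Phi(f_1 * f_2) = \Phi(f_1)\Phi(f_2)$. La norme $\|\cdot\|_{A \G}$ est alors sous-multiplicative, comme tir�e en arri�re d'une $C^*$-norme, et son compl�t�-s�par� est une alg�bre de Banach. Le cas r�duit s'obtient par le m�me raisonnement en rempla�ant $C^*(G,A)$ par $C^*_r(G,A)$.

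Ensuite, supposons $\rho$ unitaire ; je montrerais que $A \G \simeq C^*(G, A)$ � �quivalence de norme pr�s (et de fa�on analogue pour le cas r�duit). Pour toute paire covariante $(\pi_A, u)$ de $(A, G)$ sur un espace de Hilbert $H$, la paire $(\pi_A \otimes 1_V, u \otimes \rho)$ est encore covariante sur $H \otimes V$ puisque $u \otimes \rho$ est unitaire. �valuer $\Phi(f)$ via cette paire fournit l'in�galit� $\|f\|_{A \G} \leq \|f\|_{C^*(G,A)}$. Pour l'in�galit� inverse, j'utiliserais la repr�sentation contragr�diente $\bar\rho$, elle aussi unitaire : la repr�sentation $\bar\rho \otimes \rho$ sur $\bar V \otimes V \simeq \End(V)$ agit par conjugaison et admet le vecteur invariant $1_V$. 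En appliquant la d�finition de la norme tordue � la paire covariante modifi�e $(\pi_A \otimes 1_{\bar V}, u \otimes \bar\rho)$, puis en restreignant � l'invariant $1_V$, on obtient $\|\int_G \pi_A(f(g)) u(g) \, dg\|_{B(H)} \leq \|f\|_{A \G}$ pour toute paire covariante $(\pi_A, u)$, d'o� $\|f\|_{C^*(G, A)} \leq \|f\|_{A \G}$.

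La r�ciproque constitue le point d�licat. Supposons que $A \G$ soit une $C^*$-alg�bre ; pour all�ger, consid�rons d'abord $A = \CC$ (le cas g�n�ral se ram�ne au cas scalaire en utilisant un �l�ment autoadjoint de norme $1$ dans $A$). Fixons $g_0 \in G$ et choisissons une suite positive $(f_n) \subset C_c(G)$ approchant $\delta_{g_0}$ avec $\int f_n = 1$. La majoration $\|(\pi \otimes \rho)(f_n)\| \leq \int f_n(g) \|\rho(g)\| \, dg$, valable pour toute repr�sentation unitaire $\pi$, donne $\|f_n\|_{\Amax} \leq \int f_n(g) \|\rho(g)\| \, dg \to \|\rho(g_0)\|$ ; sp�cialiser � la repr�sentation triviale $\pi = 1_\CC$ fournit la borne inf�rieure $\|f_n\|_{\Amax} \geq \|\int_G f_n(g) \rho(g) \, dg\| \to \|\rho(g_0)\|$. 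On a donc $\|f_n\|_{\Amax} \to \|\rho(g_0)\|$. Un calcul analogue pour $f_n^* * f_n$, qui approche $\delta_e$ avec $\int = 1$, donne $\|f_n^* * f_n\|_{\Amax} \to \|\rho(e)\| = 1$. L'identit� $C^*$, � savoir $\|f_n^* * f_n\| = \|f_n\|^2$, force alors $\|\rho(g_0)\|^2 = 1$ pour tout $g_0 \in G$ ; combin�e � $\|\rho(g_0)^{-1}\| = \|\rho(g_0^{-1})\| = 1$, elle entra�ne que $\rho(g_0)$ est une isom�trie de $V$, donc unitaire. L'obstacle principal r�side dans le contr�le rigoureux des asymptotiques de normes, en particulier dans le cas de coefficients dans $A$ non triviaux.
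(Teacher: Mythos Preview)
Your treatment of the ``if'' direction mirrors the paper's: the inequality $\|f\|_{A\G}\le\|f\|_{C^*(G,A)}$ is immediate, and the reverse comes from the fact that the trivial representation sits inside $\rho^*\otimes\rho$, so every unitary $\pi$ is weakly contained in $(\pi\otimes\rho^*)\otimes\rho$. The paper carries this out only for $A=\CC$ and asserts that the general case is analogous; your formulation via covariant pairs makes that passage explicit. For the reduced algebra the paper is more concrete than your ``de fa\c con analogue'': it writes down the unitary $T:L^2(G,V)\to L^2(G,V)$, $(Tf)(g)=\rho(g)^{-1}f(g)$, which intertwines $\lambda_G\otimes\rho$ and $\lambda_G\otimes 1_V$ (Fell absorption), yielding $\|f\|_{\A}=\|f\|_{C^*_r(G)}$ exactly rather than via a contragredient sandwich.

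For the ``only if'' direction you actually go further than the paper, whose proof does not address it. Your argument via approximate $\delta$-functions is sound under the interpretation that the standard involution on $C_c(G,A)$ extends to $A\G$ and satisfies the $C^*$-identity there: the squeeze $\|f_n\|_{\Amax}\to\|\rho(g_0)\|$ together with $\|f_n^*\!*\!f_n\|_{\Amax}\to 1$ (since $f_n^*\!*\!f_n$ is again a positive $\delta$-approximation at $e$ with total mass $1$) forces $\|\rho(g_0)\|=1$ for every $g_0$, and then $\|\rho(g_0)\|=\|\rho(g_0)^{-1}\|=1$ makes each $\rho(g_0)$ unitary. Be aware, though, that the claim is a statement about the \emph{given} norm attached to the chosen hermitian structure on $V$: the remark immediately following the proposition notes that for compact $G$ one always has $A\G=C^*(G,A)$ up to equivalence of norms, whatever $\rho$ is, so the ``only if'' cannot be read at the level of the isomorphism class of the Banach algebra.
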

\begin{proof}
Il est clair que ce sont des algèbres de Banach. On va montrer qu'elles coïncident avec les produits croisés $C^*$-algébriques si et seulement si $\rho$ est unitaire dans le cas où $A=\CC$, le cas général étant analogue. Supposons que $\rho$ soit une représentation unitaire de $G$. Par définition, si $f\in C_c(G)$ alors $$\|f\|_{\Amax}=\sup\limits_{(\pi,H)}\|(\pi\otimes\rho)(f)\|_{\mathcal{L}(H\otimes V)},$$
où le supremum est pris parmi les représentations unitaires de $G$. \\
Alors on a trivialement l'inégalité de normes $\|.\|_{\Amax}\leq\|.\|_{C^*(G)}$ de sorte que $\Amax$ est un quotient de $C^*(G)$. Soit $(\rho^*,V^*)$ la représentation contragrédiente de $G$ sur l'espace dual $V^*$ de $V$. Donc, comme $(V^*\otimes V)^G=\mathrm{Hom}_{G}(V,V)$, la représentation triviale $1_G$ de $G$ est fortement contenue dans $\rho^*\otimes\rho$. Ceci implique que toute représentation unitaire $\pi$ est fortement contenue dans $\pi\otimes\rho^*\otimes\rho$, et donc que toute représentation unitaire $\pi$ est fortement contenue dans l'ensemble $\{\sigma\otimes\rho |\,\,\sigma\,\,\hbox{une représentation unitaire}\}$. D'où $\|.\|_{C^*(G)}\leq\|.\|_{\Amax}$ et $C^*(G)=\Amax$, à équivalence de norme près.\\
D'autre part, pour $f\in C_c(G)$,
$$\|f\|_{\A}=\|(\lambda_G\otimes\rho)(f)\|_{\mathcal{L}(L^2(G)\otimes V)},$$
où $\lambda_G:G\rightarrow\mathcal{L}(L^2(G))$ est la représentation régulière gauche de $G$. Dans ce cas, le résultat vient du fait que la représentation régulière de $G$ est ``absorbante'': la représentation $\lambda_G\otimes\rho$ est unitairement équivalente à $\lambda_G\otimes \mathrm{Id}_G$, où on note $\mathrm{Id}_G$ la représentation triviale $G$ sur $V$; l'opérateur d'entrelacement entre ces deux représentations est donné par l'application
\begin{align*}
 L^2(G,V) &\rightarrow L^2(G,V)\\
f&\mapsto \big(g\mapsto f(g)\rho(g^{-1})\big),
\end{align*}
\sloppy (en identifiant $L^2(G)\otimes V$ avec $L^2(G,V)$). Il est facile de vérifier que, $T(\lambda_G\otimes\rho)(g)=(\mathrm{Id}_G\otimes\lambda_G)(g)T$,  pour tout $g\in G$.\\
\end{proof}
%%%% falta un sentido!!! %%%%%%%%%%%%%%%%%%%%%%%%%%%%%%%%%%%%%%%%%%%%%%%%%%%%%%%%%%%%%%%%
%%%%%%%%%%%%%%%%%%%%%%%%%%%%%%%%%%%%%%%%%%%%%%%
%%%%%%%%%%%%%%%%%%%%%%%%%%%%%%%%%%%%%%%%%%%%%%%%
%%%%%%%%%%%%%%%%%%%%%%%%%%%%%%%%!!!!!!!!!!!!!!!!!!ATTENTIONNNNN!!!!

\begin{Rem}
\begin{enumerate}
\item Il est clair que $\lambda_{G,A}$ induit un unique morphisme d'algèbres de Banach
$$\lambda_{G,A}^{\rho}:A\G\to A\rG.$$
\item Si on choisit une autre norme sur $V$, comme deux normes sur $V$ sont toujours équivalentes, on obtient alors une norme équivalente sur $A\G$.
En particulier, si $G$ est un groupe compact, comme toute représentation de $G$ sur un espace de Hilbert est unitarisable, alors $A\G=C^*(G,A)$, à équivalence de norme près. De même dans le cas de $A\rG$.
% \item Toute représentation de $G$ de la forme $\pi\otimes\rho$, avec $\pi$ une représentation unitaire, peut être prolongée, de façon évidente, en une représentation de $\Amax$ sur $H\otimes V$ que nous notons $\pi\otimes\rho$, par abus de notation.
\item Soit $\rho^*$ la représentation contragrédiente de $\rho$ sur l'espace dual $V^*$ de $V$. Si $\rho$ et $\rho^*$ sont conjuguées par un opérateur unitaire de $V$ dans $V^*$, alors $\Amax$ et $\A$ sont des algèbres involutives.
\end{enumerate}
\end{Rem}

\begin{Example}
 Soit $G=\ZZ$ et soit $\rho:\ZZ\to \CC$ un caractère de $\ZZ$. Soit $S^{\rho}$ le cercle dans $\CC$ de rayon $|\rho(1)|$. Alors $\A$ est l'algèbre des fonctions continues sur $S^{\rho}$.
\end{Example}

La proposition suivante montre que les algèbres tordues peuvent être ``petites'', c'est-à-dire contenues dans des algèbres $L^1$.
\begin{Pro}\label{petites}
 Soit $\Gamma$ est un groupe discret et $\rho$ une représentation de $\Gamma$ sur un espace vectoriel de dimension finie $V$ muni d'une norme hermitienne et telle que $\sum\limits_{\gamma}\frac{1}{\|\rho(\gamma)\|_{\mathrm{End}(V)}}$ converge. Alors, pour toute $\Gamma$-$C^*$-algèbre $A$,
$$A\rtimes^{\rho}_r(\Gamma)\subset l^1(\Gamma,A)\subset C^*_r(\Gamma,A).$$
\end{Pro}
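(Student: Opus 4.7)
L'argument repose sur le fait que l'on peut estimer séparément chaque coefficient. L'outil principal est l'espérance conditionnelle canonique $E:C^*_r(\Gamma,A)\to A$ donnée par $E\big(\sum_{\gamma}f(\gamma)e_{\gamma}\big)=f(e)$, qui est de norme $1$ et complètement positive. Pour chaque $\gamma_0\in\Gamma$, on en déduit une application $E_{\gamma_0}:C^*_r(\Gamma,A)\to A$ définie par $E_{\gamma_0}(x)=E(xe_{\gamma_0^{-1}})$, également de norme majorée par $1$, et qui extrait le coefficient en $\gamma_0$ d'un élément de $C_c(\Gamma,A)$.

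L'étape clef est alors la suivante. Étant donné $f\in C_c(\Gamma,A)$, posons
$$F=\sum_{\gamma}f(\gamma)e_{\gamma}\otimes\rho(\gamma)\in C^*_r(\Gamma,A)\otimes\End(V).$$
En appliquant $E_{\gamma_0}\otimes\mathrm{id}_{\End(V)}$, qui est de norme $\leq 1$ sur le produit tensoriel minimal, on trouve $(E_{\gamma_0}\otimes\mathrm{id})(F)=f(\gamma_0)\otimes\rho(\gamma_0)$, dont la norme dans $A\otimes\End(V)$ vaut exactement $\|f(\gamma_0)\|_{A}\cdot\|\rho(\gamma_0)\|_{\End(V)}$. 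D'où l'estimation
$$\|f(\gamma_0)\|_A\leq\frac{1}{\|\rho(\gamma_0)\|_{\End(V)}}\|f\|_{A\rtimes^{\rho}_r\Gamma}.$$
En sommant sur $\gamma_0\in\Gamma$, on obtient $\|f\|_{\ell^1(\Gamma,A)}\leq C\|f\|_{A\rtimes^{\rho}_r\Gamma}$ avec $C=\sum_{\gamma}\|\rho(\gamma)\|^{-1}_{\End(V)}$, fini par hypothèse. Par densité, ceci prolonge l'identité sur $C_c(\Gamma,A)$ en une application continue $A\rtimes^{\rho}_r\Gamma\to \ell^1(\Gamma,A)$.

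La seconde inclusion $\ell^1(\Gamma,A)\subset C^*_r(\Gamma,A)$ est classique, résultant de la majoration $\|\lambda_{\Gamma,A}(f)\|\leq\|f\|_{\ell^1(\Gamma,A)}$. Le point qui demande un peu d'attention est de vérifier que les deux applications ainsi construites sont bien des inclusions, c'est-à-dire injectives. Pour la deuxième, cela provient de ce que $E$ sépare les éléments de $\ell^1(\Gamma,A)$. Pour la première, on remarque que $A\rtimes^{\rho}_r\Gamma$ est par définition un sous-espace de $C^*_r(\Gamma,A)\otimes\End(V)$, et que chaque coefficient $a_{\gamma}$ d'un élément de la forme $\sum_{\gamma}a_{\gamma}\otimes\rho(\gamma)$ se récupère par la formule $a_{\gamma}\otimes 1=(\mathrm{id}_A\otimes\rho(\gamma)^{-1})(E_{\gamma}\otimes\mathrm{id})(F)$, en utilisant que $\rho(\gamma)$ est inversible. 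L'obstacle principal est donc simplement cette vérification d'injectivité, qui est directe une fois la bonne formulation trouvée ; l'estimation elle-même, cœur de la preuve, est immédiate via $E_{\gamma_0}\otimes\mathrm{id}$.
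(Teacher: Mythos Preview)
Votre démonstration est correcte et suit au fond la même stratégie que l'article : borner chaque terme $\|f(\gamma)\|_A\,\|\rho(\gamma)\|$ par $\|f\|_{A\rtimes^{\rho}_r\Gamma}$, puis sommer contre les poids $\|\rho(\gamma)\|^{-1}$. La différence est dans l'outil employé pour obtenir cette borne ponctuelle. L'article établit d'abord l'inclusion contractive $C^*_r(\Gamma,A)\to\ell^2(\Gamma,A)\to\ell^\infty(\Gamma,A)$ via l'action sur le vecteur $\delta$ concentré au neutre (ce qui oblige à traiter séparément le cas non unifère avec une unité approchée), puis l'applique avec $A$ remplacée par $A\otimes\End(V)$ à la fonction $\gamma\mapsto f(\gamma)\otimes\rho(\gamma)$. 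Vous court-circuitez cette chaîne en invoquant directement l'espérance conditionnelle canonique $E$ et ses translatées $E_{\gamma_0}$, ce qui est plus concis et évite la distinction unifère/non unifère. Le seul point laissé implicite dans votre argument est que $E_{\gamma_0}\otimes\mathrm{id}_{\End(V)}$ soit bien contractive sur le produit tensoriel minimal : c'est le cas parce que $E_{\gamma_0}$, composée de la multiplication à droite par le multiplicateur unitaire $e_{\gamma_0^{-1}}$ et d'une espérance conditionnelle, est complètement contractive.
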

\begin{proof}
 Soit $A$ une $\Gamma$-$C^*$-algèbre. Supposons d'abord par simplicité que $A$ est unifère et notons $1_A$ son unité. Soit $\lambda_{\Gamma,A}$ la représentation régulière gauche de $C^*_r(\Gamma,A)$ dans $l^2(\Gamma,A)$. On note $\delta$ l'élément de $l^2(\Gamma,A)$ qui envoie l'identité $e$ de $\Gamma$ vers $1_A$ et qui est nulle sur $\gamma\neq e$. Soit $f\in C_c(\Gamma,A)$ que l'on note sous la forme $\sum\limits_{\gamma}f(\gamma)e_{\gamma}$. On a alors que $$\|\lambda_{\Gamma,A}(f)\delta\|_{l^2(\Gamma,A)}\leq\|\sum\limits_{\gamma}f(\gamma)e_{\gamma}\|_{C^*_r(\Gamma,A)},$$
par défintion de $C^*_r(\Gamma,A)$.
Or,
\begin{align*}
 \|\sum\limits_{\gamma}f(\gamma)e_{\gamma}\|_{l^2(\Gamma,A)}&=\|\sum\limits_{\gamma}\gamma^{-1}\big(f(\gamma)^*f(\gamma)\big)\|^{\frac{1}{2}}_A\\
&=\|\lambda_{\Gamma,A}(f)\delta\|_{l^2(\Gamma,A)},
\end{align*}
donc, l'application
\begin{align*}
 \phi:C_c(\Gamma,A)&\rightarrow C_c(\Gamma,A)\\
\sum\limits_{\gamma}f(\gamma)e_{\gamma}&\mapsto\sum\limits_{\gamma}e_{\gamma}\gamma^{-1}(f(\gamma)),
\end{align*}
se prolonge en une application continue injective de $C^*_r(\Gamma,A)$ dans $l^2(\Gamma,A)$ de norme inférieure ou égale à $1$.\\
De plus,
\begin{align*}
 \|\sum\limits_{\gamma}\gamma^{-1}\big(f(\gamma)^*f(\gamma)\big)\|^{\frac{1}{2}}_A&\geq\sup\limits_{\gamma\in\Gamma}\|{\gamma}^{-1}\big(f(\gamma)^*f(\gamma)\big)\|^{\frac{1}{2}}_A\\
&\geq \sup\limits_{\gamma\in\Gamma}\|f(\gamma)^*f(\gamma)\|^{\frac{1}{2}}_A\\
&\geq \sup\limits_{\gamma\in\Gamma}\|f(\gamma)\|_A,
\end{align*}
donc $\|f\|_{l^{\infty}(\Gamma,A)}\leq\|f\|_{l^2(\Gamma,A)}$, où $l^{\infty}(\Gamma,A)$ est le complété de $C_c(\Gamma,A)$ pour la norme $$\|f\|=\sup\limits_{\gamma\in\Gamma}\|f(\gamma)\|_A,$$ pour $f\in C_c(\Gamma,A)$. Donc, $\phi$ se prolonge en une application injective de $C^*_r(\Gamma,A)$ dans $l^{\infty}(\Gamma,A)$ qui est de norme inférieure ou égale à $1$.\\
Soit $l^{\infty,\rho}(\Gamma,A)$ le complété de $C_c(\Gamma,A)$ pour la norme $$\|f\|=\sup\limits_{\gamma\in\Gamma}\|f(\gamma)\|_A\|\rho(\gamma)\|_{\mathrm{End}(V)}.$$
Comme $A\rtimes^{\rho}_r\Gamma$ et $l^{\infty,\rho}(\Gamma,A)$ s'envoient de façon isométrique dans $C^*_r(\Gamma,A)\otimes\mathrm{End}(V)$ et dans $l^{\infty}(\Gamma,A)\otimes\mathrm{End}(V)$ respectivement, on a que pour toute $f\in C_c(\Gamma,A)$
$$\|f\|_{l^{\infty,\rho}(\Gamma,A)}\leq \|f\|_{A\rtimes^{\rho}_r\Gamma}.$$
Soit $C$ une constante positive telle que $\sum\limits_{\gamma}\frac{1}{\|\rho(\gamma)\|}<C$. On a alors, pour tout $f\in C_c(\Gamma,A)$,
\begin{align*}
 \|f\|_{l^1(\Gamma,A)}&=\sum\limits_{\gamma}\|f(\gamma)\|_A\|\rho(\gamma)\|\frac{1}{\|\rho(\gamma)\|}\\
&\leq\big(\sup\limits_{\gamma\in\Gamma}\|f(\gamma)\|_A\|\rho(\gamma)\|\big)\sum\limits_{\gamma}\frac{1}{\|\rho(\gamma)\|}\\
&\leq C\|f\|_{l^{\infty,\rho}(\Gamma,A)}\\
&\leq C\|f\|_{A\rtimes^{\rho}_r\Gamma},
\end{align*}
donc, il existe une application continue de norme inférieure ou égale à $1$ de $A\rtimes^{\rho}_r\Gamma$ dans $l^1(\Gamma,A)$ qui prolonge l'indentité sur $C_c(\Gamma,A)$.\\

Supposons maintenant que $A$ n'est pas unifère. Soit $(u_i)_{i\in I}$ une unité approchée de $A$. Pour tout $i\in I$, soit $\delta_i$ la fonction sur $\Gamma$ qui envoie l'identité de $\Gamma$ sur $u_i$ et telle que, pour tout élément $\gamma$ de $\Gamma$ différent de l'identité, $\delta_i(\gamma)$ est nul. Dans ce cas, pour tout $f\in C_c(\Gamma,A)$,
$$\lim\limits_i\|\lambda_{\Gamma,A}(f)\delta_i\|_A=\|f\|_{l^2(\Gamma,A)},$$
et comme $\|\delta_i\|_{l^2(\Gamma,A)}\leq 1$, car $\|u_i\|_A\leq 1$,
$$\|f\|_{l^2(\Gamma,A)}\leq\|f\|_{C^*_r(\Gamma,A)},$$
ce qui implique que $C^*_r(\Gamma,A)\subset l^2(\Gamma,A)$. On alors que $C^*_r(\Gamma,A)\subset l^{\infty}(\Gamma,A)$ et la même démonstration que dans le cas unifère montre que s'il existe une constante $C>0$ telle que $\sum\limits_{\gamma}\frac{1}{\|\rho(\gamma)\|}<C$, alors $$A\rtimes^{\rho}_r\Gamma\subset l^1(\Gamma,A).$$

\end{proof}

\subsection{Fonctorialité} Le lemme suivant dit que la construction des produits croisés tordus est fonctorielle.
Nous donnons d'abord la définition suivante
\begin{Def}
 Soient $B$ et $C$ deux $G$-$C^*$-algèbres $\rho$ une représentation de dimension finie de $G$ et $\theta:B\rightarrow C$ un morphisme $G$-équivariant de $C^*$-algèbres. On note $\tilde{\theta}$ l'application linéaire continue de $C_c(G,B)$ dans $C_c(G,C)$ telle que pour tout $f\in C_c(G,B)$, $$\tilde{\theta}(f)(g)=\theta(f(g)).$$
\end{Def}

\begin{Lemma}
 L'application $\tilde{\theta}$ se prolonge en un morphisme d'algèbres de Banach $\theta\G:B\G\rightarrow C\G$ (resp. $\theta\rG:B\rG\rightarrow C\rG$).
\end{Lemma}

\begin{proof}
 En effet,
\begin{align*}
 \|(\theta\G)(f)\|_{C\G}&=\|\int_G\theta(f(g))e_g\otimes\rho(g)dg\|_{C^*(G,C)\otimes \mathrm{End}(V)}\\
\leq\|(C^*(G,\theta&)\otimes\mathrm{Id}_V)\int_Gf(g)e_g\otimes\rho(g)dg\|_{C^*(G,C)\otimes \mathrm{End}(V)}\\
\leq\|C^*(G,\theta)&\otimes\mathrm{Id}_V\|_{\mathrm{Hom}(C^*(G,B)\otimes\mathrm{End}(V),C^*(G,C)\otimes\mathrm{End}(V))}\|f\|_{B\G},
\end{align*}
où $C^*(G,\theta):C^*(G,B)\rightarrow C^*(G,C)$ est le morphisme induit par $\theta$. Autrement dit, on a un diagramme commutatif
$$\xymatrix{
    C_c(G,B)\ar[r]\ar[d]^{\tilde{\theta}} &B\G\ar[r]& C^*(G,B)\otimes\EEnd(V)\ar[d]^{C^*(G,\theta)\otimes\mathrm{Id}_V}\\
C_c(G,C)\ar[r]&C\G\ar[r] & C^*(G,C)\otimes\EEnd(V).
}$$
Il en est de même dans le cas du produit croisé tordu réduit.
\end{proof}

\section{Morphisme de Baum-Connes tordu}
Notre but est de calculer la $K$-théorie des produits croisés tordus. 
Soit $\underline{E}G$ l'espace classifiant pour les actions propres de $G$. Pour toute $G$-$C^*$-algèbre $A$, on note $K^{\mathrm{top}}(G,A)$ la $K$-homologie $G$-équivariante de $\underline{E}G$ à valeurs dans $A$ introduite dans \cite{Baum-Connes-Higson}. Dans cette section, pour toute représentation $\rho$ de dimension finie de $G$, nous allons construire deux morphismes de groupes:
$$\mu^A_{\rho}:K^{\mathrm{top}}(G,A)\rightarrow K(A\G)\quad\hbox{et}\quad\mu^A_{\rho,r}:K^{\mathrm{top}}(G,A)\rightarrow K(A\rG).$$

\subsection{Flèche de descente tordue}\label{Defdescente}

Soient G un groupe localement compact et $(\rho,V)$ une
représentation de dimension finie de $G$. Soient $A$ et $B$ deux $G$-$C^*$-algèbres. Dans \cite[Theorem 3.11]{Kasparov88}, Kasparov a défini deux morphismes de descente, $j^G$ et $j^G_r$, qui vont de $KK_G(A,B)$ dans $KK(C^*(G,A),C^*(G,B))$ et dans $KK(C_r^*(G,A),C_r^*(G,B))$, respectivement, et qui permettent de définir le morphisme de Baum-Connes en utilisant la $KK$-théorie de Kasparov. Nous allons ici définir deux morphismes de descente \emph{tordus}
\begin{align*}
 j_{\rho}&:KK_G(A,B)\rightarrow KK^{\mathrm{ban}}(A\G,B\G),\\
j_{\rho,r}&:KK_G(A,B)\rightarrow KK^{\mathrm{ban}}(A\rG,B\rG),
\end{align*}
analogues à $j^G$ et $j^G_r$. 
On remarque cependant que, comme $A\G$ et $B\G$ sont des algèbres de Banach, ces morphismes ont nécessairement une image dans la $KK$-théorie banachique de Lafforgue, notée $KK^{\mathrm{ban}}$, et non pas dans la $KK$-théorie de Kasparov. La construction des morphismes tordus est analogue à la construction de la descente banachique dans le cas des complétions inconditionnelles \cite[Section 1.5]{Lafforgue02}.\\

\noindent{\bf Notations.} Introduisons d'abord quelques notations. Si $A$ est une $C^*$-algèbre, on note $\tilde{A}$ son algèbre unitarisée.

On appelle longueur sur $G$ toute fonction $\ell:G\rightarrow [0,+\infty[$ continue et telle que $\ell(g_1g_2)\leq\ell(g_1)+\ell(g_2)$, pour tout $g_1,g_2\in G$.\\

Soient $A$ et $B$ deux
$G$-$C^*$ algèbres et soit $E$ un $G$-$(A,B)$-bimodule de Kasparov
(c'est-à-dire que $E$ est un $G$-$B$-module hilbertien et $A$ agit sur
$E$ par un morphisme $G$-équivariant de $A$ dans $\mathcal{L}(E)$ cf. \cite{Kasparov88}). On note $<.,.>:E\times E\rightarrow B$ la forme sesquilinéaire qui fait de $E$ un $B$-module hilbertien. La norme sur $E$ définie par $$\|x\|_E=\|\langle x,x\rangle\|_B^{\frac{1}{2}},$$ fait de $E$ un espace de Banach sur lequel $G$ agit de façon continue. De plus, on a que $\|xb\|_E\leq\|x\|_E\|b\|_B$ et $\|gx\|_E\leq\|x\|_E$ pour tout $g\in G$, $b\in B$ et $x\in E$, donc $E$ est un $G$-$B$-module de Banach à droite (cf. \cite[Section 1.1]{Lafforgue02}).\\

\sloppy On considère le $G$-$B$-module de Banach à gauche non-dégénéré déterminé par $E$, que l'on note $\overline{E}$, tel qu'il existe une isométrie $\CC$-antilinéaire $*:E\rightarrow\overline{E}$ vérifiant $b^*x^*=(xb)^*$ pour $x\in E$ et $b\in B$. On définit alors un crochet $\langle.,.\rangle:\overline{E}\times E\rightarrow B$, que l'on note $\langle.,.\rangle$ par abus de notation, par la formule $\langle x^*,y\rangle=\langle x,y\rangle$ pour $x,y\in E$.

D'après \cite[propostion 1.14]{Lafforgue02}, $(\overline{E},E)$ est alors une $G$-$B$-paire et l'action $G$-équivariante de $A$ sur $E$ fait de $(\overline{E},E)$ un $G$-$(A,B)$-bimodule de Banach. Si $T\in\mathcal{L}_B(E)$ l'application $\overline{T}^{*}=(*)T^*(*)^{-1}$ définit un élément de $\mathcal{L}_B(\overline{E})$.\\

Pour toute longueur $\ell$ sur le groupe $G$, on note $\iota$ l'application:
$$\iota:KK_G(A,B)\rightarrow KK_{G,\ell}^{\mathrm{ban}}(A,B),$$
définie dans \cite[Section 1.6]{Lafforgue02} et déterminée par l'application:
\begin{align*}
 E_G(A,B)&\rightarrow E^{\mathrm{ban}}_{G,\ell}(A,B)\\
(E,T)&\mapsto \big((\overline{E},E),(\overline{T}^{*},T)\big),
\end{align*}
où on note $E_G(A,B)$ l'ensemble des cycles $G$-équivariants sur le couple $(A,B)$ (cf. \cite{Kasparov88}, voir aussi \cite[Defintion 9.4]{Skandalis91}) et $E^{\mathrm{ban}}_{G,\ell}(A,B)$ est l'ensemble des classes d'isomorphisme de cycles banachiques $(G,\ell)$-équivariants sur $(A,B)$ (cf. \cite[Définition 1.2.2]{Lafforgue02}).\\

Si $B$ est une algèbre de Banach, $E$ est un $B$-module de Banach à droite et $F$ est un $B$-module de Banach à gauche, on note $E\otimes^{\pi}_BF$ le produit tensoriel projectif de $E$ et $F$ au-dessus de $B$, c'est-à-dire le complété-séparé du produit tensoriel algébrique $E\otimes_{\CC}^{alg}F$ pour la plus grande semi-norme $\|.\|$ telle que $\|x\otimes by-xb\otimes y\|=0$ et $\|x\otimes y\|\leq\|x\|\,\|y\|$ pour $x\in E$, $y\in F$ et $b\in B$.\\

Si $E$ et $F$ sont deux $B$-paires, on note $\mathcal{L}_B(E,F)$ l'espace des morphismes de $B$-paires. Pour $\xi\in E^<$ et $y\in F^>$, on rappelle que l'on note $|y\rangle\langle\xi|\in\mathcal{L}_B(E,F)$ le morphisme de $B$-paires défini par
   $$\begin{array}{rl}
   |y\rangle\langle\xi|^>:E^>&\rightarrow F^>\\
x&\mapsto y\langle\xi,x\rangle,
\end{array}\quad\text{et}\quad
\begin{array}{rl}
|y\rangle\langle\xi|^<:F^<&\rightarrow E^<\\
\eta&\mapsto \langle\eta,y\rangle\xi.
  \end{array}$$
Un morphisme de $B$-paires de $E$ dans $F$ est compact s'il est limite dans $\mathcal{L}_B(E,F)$ de combinaisons linéaires de morphismes de la forme $|y\rangle\langle x|$ (cf. \cite[Définition 1.1.3]{Lafforgue02}). On note $\mathcal{K}_B(E,F)$ l'espace des morphismes compacts de $E$ dans $F$.\\

 On considère l'espace vectoriel $C_c(G,E)$ (resp. $C_c(G,\overline{E})$) des fonctions continues à support compact sur $G$ à valeurs dans $E$ (resp. à valeurs dans $\overline{E}$) et on note $x\in C_c(G,E)$ sous la forme
$x=\int_Gx(g)e_gdg$ (resp. $\xi\in C_c(G,\overline{E})$ sous la forme
$\xi=\int_Ge_g\xi(g)dg$).\\

 \sloppy Soient $C^*(G,E)$ et $C^*_r(G,E)$ les complétés de $C_c(G,E)$ définis comme dans \cite[Definition 3.8]{Kasparov88}. On considère alors le module hilbertien $C^*(G,E)\otimes\End(V)$ défini sur le produit tensoriel de $C^*$-algèbres $C^*(G,B)\otimes\mathrm{End}(V)$ et construit par produit tensoriel externe. De même, soit $C_r^*(G,E)\otimes\End(V)$ le $\big(C_r^*(G,B)\otimes\mathrm{End}(V)\big)$-module hilbertien.

\begin{Def}
On note $E\G$ (resp. $E\rG$) l'adhérence de l'image de $C_c(G,E)$ dans $C^*(G,E)\otimes \mathrm{End}(V)$ (resp. $C^*_r(G,E)\otimes \mathrm{End}(V)$) par l'application
\begin{align*}
 C_c(G,E)&\rightarrow C_c(G,E)\otimes\End(V)\\
\int_Gx(g)e_gdg&\mapsto\int_Gx(g)e_g\otimes\rho(g)dg.
\end{align*}
De même, on note $\overline{E}\G$ (resp. $\overline{E}\rG$) l'adhérence de l'image de $C_c(G,\overline{E})$ dans $\overline{C^*(G,E)}\otimes \mathrm{End}(V)$ (resp. $\overline{C^*_r(G,E)}\otimes \mathrm{End}(V)$) par l'application
\begin{align*}
C_c(G,\overline{E})&\rightarrow C_c(G,\overline{E})\otimes\End(V)\\
\int_Ge_g\xi(g)dg&\mapsto\int_Ge_g\xi(g)\otimes\rho(g)dg.
\end{align*}
\end{Def}

Nous allons maintenant définir une structure de $B\G$-paire (resp. $B\rG$-paire) sur le couple $(\overline{E}\G,E\G)$ (resp. $(\overline{E}\rG,E\rG)$) que l'on note $E\G$ (resp. $E\rG$) par abus de notation. On donne alors la définition suivante, qui est complètement analogue à la définition \cite[Définition 1.5.3]{Lafforgue02}
\begin{Def}
Soient $x=\int_Gx(g)e_gdg$ dans $C_c(G,E)$, $\xi=\int_Ge_g\xi(g)dg$ dans $C_c(G,\E)$ et $b=\int_Gb(g)e_gdg$ dans $C_c(G,B)$.\\ On pose
\begin{align*}
 x.b&=\int_G\int_Gx(t)t(b(t^{-1}g))dt e_gdg,\\
 b.\xi&=\int_Ge_g\int_Gg^{-1}(b(t))\xi(t^{-1}g)dtdg,\\
 \langle\xi,x\rangle&=\int_G\int_Gt(\langle\xi(t),x(t^{-1}g)\rangle) dt e_gdg.
\end{align*}
Ceci définit une structure de $B\G$-paire (resp. $B\rG$-paire) sur $(\E\G,E\G)$ (resp. $(\E\rG,E\rG)$).
\end{Def}
Maintenant, comme on a supposé de plus que $E$ est muni d'une structure de $A$-$B$-bimodule hilbertien, pour $A$ une $G$-$C^*$-algèbre, on va montrer la proposition suivante
\begin{Pro}
La paire $E\G$ (resp. $E\rG$) est un $(A\G,B\G)$-bimodule (resp. $(A\rG,B\rG)$-bimodule) de Banach.
\end{Pro}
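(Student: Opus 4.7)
The plan is to realize the bimodule structure on $E\G$ and $\overline{E}\G$ as the restriction of the standard Hilbert-module structure that exists after tensoring Kasparov's untwisted construction with $\EEnd(V)$, and then verify the explicit formulas on $C_c$-elements.

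First, I would use the classical Kasparov construction (cf. \cite[Section 3]{Kasparov88}) which, from the $G$-equivariant $(A,B)$-bimodule $E$, produces a structure of $(C^*(G,A),C^*(G,B))$-Hilbert bimodule on $C^*(G,E)$ (and a $(C^*_r(G,A),C^*_r(G,B))$-structure on $C^*_r(G,E)$). Taking the external tensor product with the identity on the finite-dimensional Hilbert space $V$ yields a Hilbert bimodule structure on $C^*(G,E)\otimes\EEnd(V)$ over $(C^*(G,A)\otimes\EEnd(V), C^*(G,B)\otimes\EEnd(V))$, and analogously in the reduced case.

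Second, I would observe that by definition $A\G$, $B\G$, $E\G$ and $\overline{E}\G$ are embedded isometrically (up to identification) inside these tensor products via $e_g\mapsto e_g\otimes\rho(g)$. The central computation is that for $a=\int a(g)e_g dg\in C_c(G,A)$ and $x=\int x(h)e_h dh \in C_c(G,E)$, the product in $C^*(G,A)\otimes\EEnd(V)$ acting on $C^*(G,E)\otimes\EEnd(V)$ gives
\begin{align*}
\left(\int a(g)e_g\otimes\rho(g)\,dg\right)\left(\int x(h)e_h\otimes\rho(h)\,dh\right)
&= \int\!\!\int a(g)\,g(x(h))\, e_{gh}\otimes \rho(gh)\, dg\,dh\\
&= \int\left(\int a(g)\,g(x(g^{-1}t))\,dg\right)e_t\otimes\rho(t)\,dt,
\end{align*}
which lies in the image of $C_c(G,E)$, and this formula agrees with the one already used to define the right $B\G$-action. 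By density and continuity of multiplication in the ambient Hilbert bimodule, the left $A\G$-action extends boundedly to $E\G$, with the required estimate $\|a\cdot x\|_{E\G}\leq \|a\|_{A\G}\|x\|_{E\G}$. A symmetric computation gives the right $A\G$-action on $\overline{E}\G$ via $e_g\xi(g)\mapsto\dots$, matching the formula in the preceding Definition.

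Third, the compatibility of these actions with the pairing $\langle\cdot,\cdot\rangle:\overline{E}\G\times E\G\to B\G$ is inherited automatically from the ambient Hilbert bimodule: all the bimodule identities, including $\langle\xi\cdot a, x\rangle=\langle\xi, a\cdot x\rangle$, hold in $C^*(G,E)\otimes\EEnd(V)$, and therefore restrict to the closed subspaces defining $E\G$ and $\overline{E}\G$. The reduced case is entirely parallel, replacing $C^*(G,\cdot)$ by $C^*_r(G,\cdot)$ throughout.

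The step I expect to be most delicate is not the algebra — which is essentially forced by Kasparov's classical descent — but rather keeping track that the explicit convolution formulas $x\cdot b$, $b\cdot\xi$ and $\langle\xi,x\rangle$ introduced in the preceding Definition are indeed the ones induced by the embedding $e_g\mapsto e_g\otimes\rho(g)$: one must check that the extra $\rho(g)$ factors combine correctly under multiplication, which works precisely because $\rho$ is a group homomorphism so that $\rho(g)\rho(h)=\rho(gh)$. Once this bookkeeping is done, the boundedness of all structure maps and all axioms of a Banach bimodule follow from the corresponding $C^*$-statements about $C^*(G,E)\otimes\EEnd(V)$ (resp. $C^*_r(G,E)\otimes\EEnd(V)$) without further work.
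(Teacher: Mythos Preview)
Your proposal is correct and takes essentially the same approach as the paper: both arguments reduce everything to the isometric embedding of the twisted objects into $C^*(G,\cdot)\otimes\EEnd(V)$ via $e_g\mapsto e_g\otimes\rho(g)$, so that all bimodule axioms and norm estimates are inherited from Kasparov's untwisted descent tensored with $\EEnd(V)$. The only cosmetic difference is that the paper packages this embedding in one line using the linking-algebra matrix $\left(\begin{smallmatrix} A\G & E\G\\ \overline{E}\G & B\G\end{smallmatrix}\right)\hookrightarrow\left(\begin{smallmatrix} C^*(G,A) & C^*(G,E)\\ \overline{C^*(G,E)} & C^*(G,B)\end{smallmatrix}\right)\otimes\EEnd(V)$, whereas you spell out the same content component by component.
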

\begin{proof}
Soit $a=\int_Ga(g)e_gdg\in C_c(G,A)$. L'algèbre $C_c(G,A)$ agit sur $C_c(G,E)$ de la façon suivante
\begin{align*}
 a.x&=\int_G\int_Ga(t)t(x(t^{-1}g))dte_gdg,\\
\xi.a&=\int_Ge_g\int_G(t^{-1}g)^{-1}(\xi(t)a(t^{-1}g))dtdg.
\end{align*}
On doit montrer que, avec ces formules, $E\G$ est un $A\G$-module de Banach à gauche, $\E\G$ un $A\G$-module de Banach à droite, que ces structures commutent avec les structures de $B\G$-modules qui découlent de la définition précédente et que, pour tout élément $a$ de $A\G$, $$\langle\xi a,x\rangle=\langle\xi,ax\rangle,$$ pour tout $\xi\in \E\G$ et pour tout $x\in E\G$. Mais ceci découle immédiatement du fait que
$\left( \begin{array}{cc}
        A\G & E\G\\
    \E\G & B\G
       \end{array}\right )$ est un sous-espace de
$$\begin{array}{cc}
       \left ( \begin{array}{cc}
          C^*(G,A) & C^*(G,E)\\
    \overline{C^*(G,E)} &  C^*(G,B)
         \end{array} \right ) \otimes&\mathrm{End}(V)\\
\end{array}$$
et que l'inclusion est une isométrie.\\
Le même raisonnement montre l'énoncé pour les produits croisés réduits.

\end{proof}

Considérons maintenant un élément de $E_G(A,B)$ que l'on note $(E,T)$. Soit $$T\G:E\G\rightarrow E\G,$$ (resp. $T\rG:E\rG\rightarrow E\rG$) le morphisme de $B\G$-paires (resp. $B\rG$-paires) défini sur $x\in C_c(G,E)$ et sur $\xi \in C_c(G,\E)$ de la façon suivante
\begin{align*}
(T\G)^{>}x(g)&= T^>(x(g)),\\
 (T\G)^{<}\xi(g)&=T^<(\xi(g)),
\end{align*}
(resp. $T\rG$).
Alors, $$\|T\G\|_{\mathcal{L}(E\G)}\leq\|T\|_{\mathcal{L}(E)},$$
(de même pour le produit croisé réduit). Ces opérateurs sont analogues aux opérateurs $\mathcal{A}(G,T)$ et $\tilde{T}$ définis dans \cite[Définition 1.5.3]{Lafforgue02} et \cite[Theorem 3.11]{Kasparov88} respectivement.
\begin{Lemma}\label{bimodtordu}
L'élément $(E\G,T\G)$ (resp. $(E\rG,T\rG)$) ainsi défini appartient à $E^{\mathrm{ban}}(A\G,B\G)$ (resp. $E^{\mathrm{ban}}(A\rG,B\rG)$).
\end{Lemma}
\begin{proof}
 On va montrer le lemme dans le cas du produit croisé maximal, le cas réduit étant complètement analogue.\\ On doit montrer que, pour tout élément $a\in C_c(G,A)$ et pour tout $g\in G$, les opérateurs $$[a,T\G],\quad a(1-(T\G)^2)\quad\text{et}\quad a(g(T\G)-T\G),$$ sont des opérateurs compacts de $E\G$.\\
\sloppy On remarque d'abord que l'opérateur $|y\rangle\langle\eta|\in\mathcal{K}_{B\G}(E\G)$, pour $\eta\in C_c(G,\overline{E})$ et $y\in C_c(G,E)$, agit sur $x\in C_c(G,E)$ et $\xi\in C_c(G,\overline{E})$ par les formules
\begin{align*}
 |y\rangle\langle\eta|^>(x)(g)&=\int_G K_s^>(s(x(s^{-1}g)))ds,\\
|y\rangle\langle\eta|^<(\xi)(g)&=\int_G (s^{-1}g)^{-1}(K_{s^{-1}g}^<(\xi(s)))ds,
\end{align*}
où $K_g=\int_G|y(s)\rangle\langle g(\eta(s^{-1}g))|ds$, pour tout $g\in G$, appartient à $\mathcal{K}_B(E)$.
\begin{Def}\label{Stilda}
On note $E$ la $B$-paire $(\overline{E},E)$ par abus de notation. Étant donné un élément $S=(S_g)_{g\in G}$ appartenant à $C_c(G,\mathcal{K}_B(E))$, on définit un opérateur $\widehat{S}$ dans $\mathcal{L}_{B\G}(E\rtimes^{\rho} G)$ de la manière suivante
$$\widehat{S}^>(x)(g)=\int_GS_s^>(s(x(s^{-1}g)))ds,$$
$$\widehat{S}^<(\xi)(g)=\int_G(s^{-1}g)^{-1}(S_{s^{-1}g}^<(\xi(s)))ds,$$
pour $x\in C_c(G,E)$ et $\xi\in
C_c(G,\E)$.
\end{Def}

\begin{Pro}\label{morBancompacts}
L'application
\begin{align*}
\psi:C_c(G,\mathcal{K}_B(E))&\rightarrow\mathcal{L}_{B\G}(E\G)\\
S&\mapsto \widehat{S},
\end{align*}
induit un morphisme d'algèbres de Banach de $\mathcal{K}(E)\G$ dans $\mathcal{L}_{B\G}(E\G)$, que l'on note $\psi$ par abus de notation et dont l'image est contenue dans $\mathcal{K}(E\G)$.
\end{Pro}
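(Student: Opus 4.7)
La strat\'egie consiste \`a r\'ealiser $\widehat{S}$ comme la restriction \`a la sous-paire $E\G$ d'un op\'erateur sur la paire ambiante $C^*(G,E)\otimes\End(V)$, obtenu \`a partir de la descente de Kasparov tensoris\'ee par $\End(V)$. On conclut \`a la compacit\'e de l'image en utilisant que la remarque pr\'ec\'edant l'\'enonc\'e fournit d\'ej\`a des op\'erateurs de rang un dans l'image.

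\textbf{\'Etape 1 (construction et norme).} Rappelons que Kasparov construit dans \cite{Kasparov88} un $*$-morphisme $\pi^G: C^*(G,\mathcal{K}_B(E))\to \mathcal{K}_{C^*(G,B)}(C^*(G,E))$ envoyant $S\in C_c(G,\mathcal{K}_B(E))$ vers l'op\'erateur $\tilde S$ d\'efini par $\tilde S^{>}(x)(g)=\int_G S_s(s(x(s^{-1}g)))\,ds$. En tensorisant minimalement par $\End(V)$, on obtient un morphisme $\pi^G\otimes\mathrm{Id}_V:C^*(G,\mathcal{K}_B(E))\otimes\End(V)\to \mathcal{K}(C^*(G,E)\otimes\End(V))$. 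Par d\'efinition, $\mathcal{K}(E)\G$ s'injecte isom\'etriquement dans la source via le twist $\tau(S):=\int S(g)e_g\otimes\rho(g)\,dg$, et $E\G$ est une sous-paire ferm\'ee du but. Un calcul direct montre que, pour $S\in C_c(G,\mathcal{K}_B(E))$, l'op\'erateur $(\pi^G\otimes\mathrm{Id}_V)(\tau(S))$ appliqu\'e \`a un \'el\'ement de la forme $\int x(t)e_t\otimes\rho(t)\,dt\in E\G$ redonne exactement la formule d\'efinissant $\widehat{S}$; en particulier, il stabilise $E\G$ et s'y identifie \`a $\widehat S$. Il s'ensuit que $\|\widehat S\|_{\mathcal{L}(E\G)}\leq \|S\|_{\mathcal{K}(E)\G}$, et $\psi$ s'\'etend par densit\'e en un morphisme continu d'alg\`ebres de Banach $\mathcal{K}(E)\G\to \mathcal{L}_{B\G}(E\G)$, la multiplicativit\'e d\'ecoulant de celle de $\pi^G\otimes\mathrm{Id}_V$.

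\textbf{\'Etape 2 (image dans $\mathcal{K}(E\G)$).} La remarque pr\'ec\'edant l'\'enonc\'e \'etablit que, pour $y\in C_c(G,E)$ et $\eta\in C_c(G,\overline{E})$, on a $\psi(K^{y,\eta})=|y\rangle\langle\eta|\in \mathcal{K}_{B\G}(E\G)$, o\`u $K^{y,\eta}_g=\int |y(s)\rangle\langle g(\eta(s^{-1}g))|\,ds$. Ainsi $\psi$ envoie le sous-espace $F:=\mathrm{vect}\{K^{y,\eta}:y\in C_c(G,E),\,\eta\in C_c(G,\overline{E})\}$ dans $\mathcal{K}_{B\G}(E\G)$. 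Par continuit\'e de $\psi$ et fermeture de $\mathcal{K}_{B\G}(E\G)$ dans $\mathcal{L}_{B\G}(E\G)$, il suffit de voir que $F$ est dense dans $\mathcal{K}(E)\G$. Sous l'identification $\pi^G:C^*(G,\mathcal{K}_B(E))\cong \mathcal{K}_{C^*(G,B)}(C^*(G,E))$, l'espace $F$ s'identifie aux combinaisons lin\'eaires d'op\'erateurs de rang un $|y\rangle\langle\eta|$ avec $y,\eta$ \`a support compact, qui est dense dans $\mathcal{K}_{C^*(G,B)}(C^*(G,E))$ par d\'efinition. En travaillant avec des approximations \`a support fix\'e dans un compact $K\subset G$, o\`u $\rho$ est born\'ee, on contr\^ole la norme tordue par la norme $C^*$, puis on conclut en combinant cette approximation avec une approximation de l'unit\'e dans $C_0(G)$.

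\textbf{Obstacle principal.} Le point le plus d\'elicat est la derni\`ere \'etape de densit\'e, car le twist $\tau$ n'est pas globalement continu pour la norme $C^*$ (la repr\'esentation $\rho$ pouvant ne pas \^etre uniform\'ement born\'ee sur $G$); il faut exploiter pr\'ecis\'ement la continuit\'e locale de $\rho$ et travailler avec une famille exhaustive de compacts de $G$ pour transf\'erer la densit\'e \`a la norme tordue.
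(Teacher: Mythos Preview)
Your Step~1 is correct and coincides with the paper's: both obtain $\widehat S$ as the restriction to the sub-pair $E\G$ of the action of $\tau(S)\in C^*(G,\mathcal K(E))\otimes\End(V)$ on $C^*(G,E)\otimes\End(V)$ (the paper phrases this through the linking algebra $\left(\begin{smallmatrix}\mathcal K(E)&E\\\overline E&B\end{smallmatrix}\right)$), yielding $\|\widehat S\|\le\|S\|_{\mathcal K(E)\G}$ and hence a well-defined Banach-algebra morphism.

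Step~2 does not close, and the obstacle you flag is genuine. You reduce correctly to the density of $F=\mathrm{vect}\{K^{y,\eta}\}$ in $\mathcal K(E)\G$, but the proposed route fails for two reasons. First, the assertion that on elements of fixed compact support $K$ one controls the twisted norm by the $C^*$-norm is a Schur-multiplier statement (pointwise multiplication by the matrix coefficients $\rho_{ij}$ on the crossed product) which has no reason to hold when the coefficient algebra $\mathcal K(E)$ is infinite-dimensional. Second, even granting such a bound, approximating $\pi^G(S)$ in $\mathcal K_{C^*(G,B)}(C^*(G,E))$ by rank-one operators gives you no control whatsoever on the supports of the $y_i,\eta_i$ in $G$; the difference $S-\sum K^{y_i,\eta_i}$ is then not supported in $K$, so the bound cannot be applied to it. Cutting off by a function in $C_0(G)$ runs into the same multiplier problem.

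The paper sidesteps this entirely by replacing the $C^*$-norm with the weighted $L^1$-bound $\|\widehat S\|\le\int_G\|S_g\|_{\mathcal K(E)}\|\rho(g)\|\,dg$, which you do not invoke. In that norm the approximation is explicit and carries support control for free: by a partition of unity one reduces to $S_g=f(g)\,|y\rangle\langle\xi|$ with $f\in C_c(G)$, then sets $y_1(g)=\chi(g)y$ and $\xi_1(g)=f(g)\,g^{-1}\xi$ for an approximate identity $\chi\in C_c(G)$, so that $K_g=\int|\chi(t)y\rangle\langle f(t^{-1}g)\,t\xi|\,dt$ is supported in $\mathrm{supp}(\chi)\cdot\mathrm{supp}(f)$, and a direct computation gives $\int_G\|S_g-K_g\|\,\|\rho(g)\|\,dg<\epsilon$.
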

Avant de démontrer cette proposition, démontrons d'abord le lemme suivant
\begin{Lemma}\label{opS}
Pour tout $a\in C_c(G,A)$ et pour tout $g\in G$, les opérateurs $[a,T\G]$, $a(1-(T\G)^2)$ et
$a(g(T\G)-T\G)$ appartiennent à l'image de $\psi$. Plus précisément, pour $a\in C_c(G,A)$ et $g\in G$, posons
\begin{align*}
&S_1:=\big(t\mapsto a(t)(t(T)-T)+[a(t),T]\big),\\
&S_2:=\big(t\mapsto a(t)t(1-T^{2})\big),\\
\hbox{et}\quad&S_3:=\big(t\mapsto a(t)t((gT)-T)\big),
\end{align*}
de sorte que $S_i\in C_c(G,\mathcal{K}(E))$ pour $i=1,..,3$. Alors,
\begin{align*}
&[a,T\G]=\widehat{S_1},\\
&a(1-(T\G)^2)=\widehat{S_2},\\
&a(g(T\G)-T\G)=\widehat{S_3},
\end{align*}
où, pour $i=1,..,3$, $\widehat{S_i}$ est l'élément de $\mathcal{L}_{B\G}(E\G)$ donné à partir de $S_i$ par la définition \ref{Stilda}.
\end{Lemma}
\begin{proof}
Soit $a\in C_c(G,A)$. Pour $x=\int_Gx(g)e_gdg\in C_c(G,E)$,
\begin{align*}
 [a,T\G]^>(x)=
\int_G\big(&\int_Ga(t)t(T^>(x(t^{-1}g)))dt\big)e_gdg\\
&-(T\G)^>\big(g\mapsto\int_Ga(t)t(x(t^{-1}g))dt\big),
\end{align*}
donc, pour tout $g\in G$,
\begin{align*}
[a,T\G]^>(x)(g)&=\int_Ga(t)t(T^>(x(t^{-1}g)))-T^>(a(t)t(x(t^{-1}g)))dt,\\
&=\int_G\big(a(t)(t(T)-T)^>+[a(t),T]^>\big)(t(x(t^{-1}g)))dt.
\end{align*}
De même, pour $\xi=\int_Ge_g\xi(g)dg\in C_c(G,\overline{E})$, on a:
\begin{align*}
[a,T\G]^<(\xi)=((T\G)^<(\xi))a-((T\G)&^<(\xi a)),\\
=\int_Ge_g\int_G(t^{-1}g)^{-1}(T^<(\xi(t))a(t^{-1}&g))dtdg\\
-(T\G)^<(\int_Ge_g\int_G(t^{-1}g)&^{-1}(\xi(t)a(t^{-1}g))dtdg),
\end{align*}
donc, pour tout $g\in G$,
\begin{align*}
[a,T\G]^<(\xi)(g)=\int_G(t^{-1}g)^{-1}&\big(T^<(\xi(t))a(t^{-1}g)\big)dt\\
&-\int_GT^<\big((t^{-1}g)^{-1}(\xi(t)a(t^{-1}g))\big)dt\\
=\int_G(t^{-1}g)^{-1}\Big(T^<(\xi(t))&a(t^{-1}g)-T^<(\xi(t)a(t^{-1}g)),\\
-(t^{-1}g)&T^<(\xi(t)a(t^{-1}g))+T^<(\xi(t)a(t^{-1}g))\Big)dt\\
=\int_G(t^{-1}g)^{-1}\Big([a(t^{-1}g)&,T]^<\xi(t),\\
&+\big(a(t^{-1}g)((t^{-1}g)T-T)\big)^<\xi(t)\Big)dt.
\end{align*}
Donc, si pour tout $t\in G$ on pose $$S_1(t):=a(t)(t(T)-T)+[a(t),T],$$ de sorte que $S_1$ définisse un élément de $C_c(G,\mathcal{K}(E))$, alors $[a,T\G]=\widehat{S_1}$.\\

Calculons maintenant $a(1-T\G^{2})^>$. Soit $x\in C_c(G,E)$, on a:
\begin{align*}
 \big(a(1-T\G^{2})\big)^>(x)&=ax-a(T\G^{2,>})x,
\end{align*}
donc, pour tout $g\in G$,
\begin{align*}
 \big(a(1-T\G^{2})&\big)^>(x)(g),\\
&=\int_Ga(t)t(x(t^{-1}g))dt-\int_G a(t)t\big(T^{2,>}x(t^{-1}g)\big)dt,\\
&=\int_Ga(t)t\big((1-T^{2,>})x(t^{-1}g)\big)dt,\\
&=\int_Ga(t)t(1-T^{2,>})tx(t^{-1}g)dt.
\end{align*}
De même, pour $\xi\in C_c(G,\overline{E})$,
$$(a(1-T\G^{2}))^<(\xi)=\xi a-(T\G^{2,<})(\xi a),$$
d'où, pour tout $g\in G$, on a:
\begin{align*}
 (a(1-T\G^{2}&))^<(\xi)(g)=\xi a(g)-T^{2,<}(\xi a(g)),\\
&=\int_G(t^{-1}g)^{-1}(\xi(t)a(t^{-1}g))-T^{2,<}((t^{-1}g)^{-1}\xi(t)a(t^{-1}g))dt,\\
&=\int_G(t^{-1}g)^{-1}\Big(\xi(t)a(t^{-1}g)-(t^{-1}g)T^{2,<}(\xi(t)a(t^{-1}g))\Big)dt,\\
&=\int_G(t^{-1}g)^{-1}\Big((t^{-1}g)(1-T^{2,<})(\xi(t)a(t^{-1}g))\Big)dt,\\
&=\int_G(t^{-1}g)^{-1}\Big(a(t^{-1}g)(t^{-1}g)(1-T^{2})^<(\xi(t))\Big)dt.
\end{align*}
Si, pour tout $t\in G$, on pose
$$S_2(t):=a(t)t(1-T^{2}),$$
alors $S_2\in C_c(G,\mathcal{K}(E))$ et $a(1-T\G^{2})=\widehat{S_2}$.\\

Prenons maintenant $g\in G$. De la même façon, on peut calculer $a(g(T\G)-T\G)$. Par exemple, pour $x\in C_c(G,E)$,
\begin{align*}
a(g(T\G)-&T\G)^>x(s),\\
&=\int_Ga(t)t(g(T^>)(x(t^{-1}s)))-a(t)tT^>(t(x(t^{-1}s)))dt,\\
&=\int_Ga(t)t((gT^>)-T^>)(t(x(t^{-1}s)))dt,
\end{align*}
et si on pose
$$S_3(t):=a(t)t((gT)-T),$$
alors $S_3\in C_c(G,\mathcal{K}(E))$ et $a(g(T\G)-T\G)=\widehat{S_3}$.
\end{proof}

Nous allons maintenant montrer la proposition \ref{morBancompacts} qui, grâce au lemme \ref{opS}, implique le lemme \ref{bimodtordu}. La démonstration repose sur le lemme suivant analogue au lemme 1.5.6 de \cite{Lafforgue02}
\begin{Lemma}\label{compacts}
Soit $S=(S_g)_{g\in G}\in C_c(G,\mathcal{K}(E))$, où $E$ est vu
comme $B$-paire. Soit
$\widehat{S}\in\mathcal{L}_{B\G}(E\rtimes^{\rho} G)$ l'opérateur défini comme dans la définition \ref{Stilda}. Alors,
\begin{equation}\label{compact}
 \|\widehat{S}\|_{\mathcal{L}_{B\G}(E\rtimes^{\rho} G)}\leq \int_G\|S_g\|_{\mathcal{K}(E)}\|\rho(g)\|_{\mathrm{End}(V)}dg,
\end{equation}
 et $\widehat{S}$ est un opérateur compact. Plus précisément, pour tout $\epsilon>0$, il existe un $n\in\NN$, et pour $i=1,...,n$ il existe des éléments $y_i\in C_c(G,E)$, $\xi_i\in C_c(G,\E)$ tels que, si on pose pour tout $g\in G$,
$$K_g=\int_G\sum\limits_{i=1}^{n}|y_i(t)\rangle\langle g(\xi_i(t^{-1}g))|dt,$$
alors:
\begin{itemize}
 \item  $K=(K_g)_{g\in G}\in C_c(G,\mathcal{K}(E))$,
\item si on considère $y_i$ et $\xi_i$ comme des éléments de $E\rtimes^{\rho}G$ et $\E\rtimes^{\rho}G$ respectivement, on a $\widehat{K}=\sum\limits_{i=1}^{n}|y_i\rangle\langle \xi_i|$,
\item et \begin{equation}\label{epsilon}
\int_G\|S_g-K_g\|_{\mathcal{K}(E)}\|\rho(g)\|_{\mathrm{End}(V)}dg\leq\epsilon.
\end{equation}
\end{itemize}
\end{Lemma}
\begin{proof}
Montrons d'abord que l'inégalité (\ref{compact}) est vraie. Pour ceci, on considère
 l'algèbre
$\left(\begin{array}{cc}
   \mathcal{K}(E)& E\\
\E & B
  \end{array}\right).$ Le produit croisé tordu par $\rho$ de $G$ avec cette algèbre vérifie l'égalité
$$\left(\begin{array}{cc}
   \mathcal{K}(E)& E\\
\E & B
  \end{array}\right)\G=\left(\begin{array}{cc}
\mathcal{K}(E)\G&E\G\\
\E\G& B\G\end{array}\right).$$
\sloppy Ceci implique que $\mathcal{K}(E)\G$ est une sous-algèbre de $\mathcal{L}(E\G)$. Plus précisément,
d'après \cite[Theorem 3.7]{Kasparov88} on a que le produit tensoriel de $C^*$-algèbres $C^*(G,\mathcal{K}(E))\otimes\mathrm{End}(V)$ est une sous-algèbre du produit tensoriel $\mathcal{K}(C^*(G,E))\otimes\mathrm{End}(V)$, et donc elle agit sur le module hilbertien $C^*(G,E)\otimes\mathrm{End}(V)$. D'autre part, par définition $\mathcal{K}(E)\G$ est une sous-algèbre fermée de $C^*(G,\mathcal{K}(E))\otimes\mathrm{End}(V)$ (pour la norme de produit tensoriel de $C^*$-algèbres $\|.\|_ {C^*(G,\mathcal{K}(E))\otimes\mathrm{End}(V)}$) et $E\G$ est un sous-module fermé de $C^*(G,E)\otimes\mathrm{End}(V)$ par construction. Ceci implique que $\mathcal{K}(E)\G$ agit aussi sur $E\G$ et c'est donc une sous-algèbre fermée de $\mathcal{L}(E\G)$, d'où l'égalité
$$\|\widehat{S}\|_{\mathcal{L}(E\G)}=\|S\|_{\mathcal{K}(E)\G}.$$

De plus,
\begin{align*}
\|S\|_{\mathcal{K}(E)\G}&=\|\int_G S_ge_g\otimes\rho(g)dg\|_{C^*(G,\mathcal{K}(E))\otimes\mathrm{End}(V)},\\
&\leq\|\int_G S_ge_g\otimes\rho(g)dg\|_{L^1(G,\mathcal{K}(E)\otimes\mathrm{End}(V))},\\
&\leq\int_G\|S_g\otimes\rho(g)\|_{\mathcal{K}(E)\otimes\mathrm{End}(V)}dg.
\end{align*}
Donc,
$$\|S\|_{\mathcal{K}(E)\G}\leq\int_G\|S_g\|_{\mathcal{K}(E)}\|\rho(g)\|_{\mathrm{End}(V)}dg,$$
d'où l'inégalité (\ref{compact}).\\

Montrons maintenant que $\widehat{S}$ est compact. En utilisant des partitions de l'unité, on voit facilement qu'il suffit de montrer le résultat pour les éléments $S$ dans $C_c(G,\mathcal{K}(E))$ de la forme $S_g=f(g)| y\rangle\langle \xi|$, pour $g\in G$, avec $f\in C_c(G)$, $y\in E$ et $\xi\in\E$.\\
Soit $f$ une fonction à support compact sur $G$. Soit $\epsilon>0$. Il existe une fonction positive $\chi\in C_c(G)$ à support compact contenu dans un voisinage de l'identité $1$ de $G$, telle que $\int_G\chi=1$, et telle que les conditions suivantes soient vérifiées \footnote{Pour que ces conditions soient vérifiées il suffit que le support de $\chi$ soit assez proche de $1$.}
\begin{align*}
&\int_G|f(g)-\chi*f(g)|\,\|\rho(g)\|_{\mathrm{End}(V)}dg\|y\|_E\|\xi\|_{\overline{E}}<\frac{\epsilon}{2}\quad\hbox{et}\\
&\int_G\big(\int_G|\chi(t)f(t^{-1}g)|\,\|\xi-t\xi\|_{E}dt\big)\,\|\rho(g)\|_{\mathrm{End}(V)}dg\|y\|_{E}<\frac{\epsilon}{2}.
\end{align*}
\sloppy Prenons $n=1$ et pour tout $g\in G$, posons $$y_1(g)=\chi(g)y\quad\hbox{et}\quad\xi_1(g)=f(g)g^{-1}(\xi),$$ de sorte que $K_g=\int_G|\chi(t)y\rangle\langle f(t^{-1}g)t(\xi)|dt$ définisse un opérateur $\widehat{K}$ de $\mathcal{K}(E\G)$. On a alors,
\begin{align*}
 \|&S_g-K_g\|_{\scriptstyle\mathcal{K}(E)}=\Big\|f(g)\big|y\big\rangle\big\langle\xi\big|-\int_{G}\chi(t)f(t^{-1}g)\big|y\big\rangle\big\langle t\xi\big|dt\Big\|_{\scriptstyle\mathcal{K}(E)},\\
 &\leq\Big\|f(g)\big|y\big\rangle\big\langle\xi\big|-\int_{G}\chi(t)f(t^{-1}g)\big|y\big\rangle\big\langle \xi\big|dt\Big\|_{\scriptstyle\mathcal{K}(E)}\\
 &\,\,\,\,\,\,\,\,\,\,\,\,\,\,\,\,\,\,\,+\Big\|\int_{G}\chi(t)f(t^{-1}g)\big|y\big\rangle\big\langle \xi\big|dt-\int_{G}\chi(t)f(t^{-1}g)\big|y\big\rangle\big\langle t\xi\big|dt\Big\|_{\scriptstyle\mathcal{K}(E)},\\
 &\leq\Big\|\big(f(g)-\chi*f(g)\big)\big|y\big\rangle\big\langle \xi\big|\Big\|_{\scriptstyle\mathcal{K}(E)}+\int_G\Big\|\,\big|\chi(t)f(t^{-1}g)y\big\rangle\big\langle \xi-t\xi\big|\Big\|_{\scriptstyle\mathcal{K}(E)}\!\!\!\!\!\!\!\!dt\,,\\
 &\leq\big| f(g)-\chi*f(g)\big|\,\big\|y\big\|_{E}\,\big\|\xi\big\|_{\overline{E}}+\int_G\Big|\chi(t)f(t^{-1}g)\Big|\,\big\|y\big\|_E\,\big\| \xi-t\xi\big\|_{\overline{E}}dt.
\end{align*}

On a alors les inégalités suivantes
\begin{align*}
\|\widehat{S}-\widehat{K}&\|_{\scriptstyle\mathcal{L}(E\G)}\leq\int_G\|S_g-K_g\|_{\scriptstyle\mathcal{K}(E)}\|\rho(g)\|_{\scriptstyle\mathrm{End}(V)}dg,\\
&\leq\int_G\Big(\big|f(g)-\chi*f(g)\big|\,\big\|y\big\|_{E}\,\big\|\xi\big\|_{\overline{E}}\\
&\,\,\,\,\,\,\,\,\,\,+\int_G\big|\chi(t)f(t^{-1}g)\big|\,\|y\|_E\,\big\| \xi-t\xi\big\|_{\overline{E}}dt\Big)\|\rho(g)\|_{\scriptstyle\mathrm{End}(V)}dg\\
&< \epsilon.
\end{align*}
D'où l'inégalité \ref{epsilon}.

\end{proof}
En appliquant le lemme \ref{compacts} à $S_1$, $S_2$ et $S_3$ on termine la démonstration du fait que $(E\G,T\G)$ appartient à $E^{\mathrm{ban}}(A\G,B\G)$.
\end{proof}
\begin{Def}\label{descentetordue}
Pour toutes $G$-$C^*$-algèbres $A$ et $B$ et pour toute représentation de dimension finie $\rho$ de $G$, on définit un morphisme de groupes de $KK_G(A,B)$ dans $KK^{\mathrm{ban}}(A\G,B\G)$ (resp. dans $KK^{\mathrm{ban}}(A\rG,B\rG)$) que l'on note $j_{\rho}$ (resp. $j_{\rho,r}$) par la formule suivante: pour $[E,T]\in KK_G(A,B)$, on pose $$j_{\rho}([E,T]):=[E\G,T\G]\quad\hbox{et}\quad j_{\rho,r}([E,T]):=[E\rG,T\rG].$$
On appelle ces morphismes \emph{morphisme de descente tordu} et \emph{morphisme de descente tordu réduit}, respectivement.
\end{Def}

\subsection{Fonctorialité}
La proposition suivante dit que les morphismes de descente tordus sont fonctoriels.
\begin{Pro}\label{descente}
 Les applications
\begin{align*}
j_{\rho}&:KK_G(A,B)\rightarrow KK^{\mathrm{ban}}(A\G,B\G),\\
j_{\rho,r}&:KK_G(A,B)\rightarrow KK^{\mathrm{ban}}(A\rG,B\rG),
\end{align*}
définies dans la définition \ref{descentetordue} sont des morphismes fonctoriels en $A$ et en $B$. De plus, ils sont tels que si $A=B$ alors $j_{\rho}(1_A)=1_{A\G}$ et $j_{\rho,r}(1_A)=1_{A\rG}$.
\end{Pro}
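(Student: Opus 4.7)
Both properties are verified at the level of cycles, following closely the template of Kasparov's descent \cite[Theorem 3.11]{Kasparov88} and the banachic descent of Lafforgue \cite[Section 1.5]{Lafforgue02}. I treat $j_\rho$; the argument for $j_{\rho,r}$ is identical with $C^*_r(G,-)$ in place of $C^*(G,-)$.

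For functoriality in $A$: let $\theta:A_1\to A_2$ be a $G$-equivariant morphism of $G$-$C^*$-algebras and $(E,T)\in E_G(A_2,B)$. The pull-back $\theta^*(E,T)$ is the same $(A_2,B)$-bimodule with $A_1$-action precomposed by $\theta$. The formula
\[
a\cdot x=\int_G\int_G a(t)\,t(x(t^{-1}g))\,dt\,e_g\,dg
\]
defining the $A\G$-module structure on $E\G$ is manifestly natural in $A$: it factors strictly through $\tilde\theta:C_c(G,A_1)\to C_c(G,A_2)$. Passing to the banachic completions immediately yields the equality of cycles $j_\rho(\theta^*(E,T))=(\theta\G)^*(j_\rho(E,T))$.

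For functoriality in $B$: let $\phi:B_1\to B_2$ be a $G$-equivariant morphism and $(E,T)\in E_G(A,B_1)$, so that $\phi_*(E,T)=(E\otimes_\phi B_2,\,T\otimes 1)$. The main step is to exhibit a canonical isomorphism of Banach $(A\G,B_2\G)$-pairs
\[
(E\otimes_\phi B_2)\G \;\cong\; (E\G)\otimes^\pi_{\phi\G}(B_2\G),
\]
intertwining the respective operators. On the dense subspaces $C_c(G,-)$ this map is given by the obvious assignment on elementary tensors; its extension to the completions is obtained from the analogous identity for ordinary crossed products---part of Kasparov's descent in \cite[Theorem 3.11]{Kasparov88}---by tensoring with $\End(V)$ and using the isometric embeddings $-\G\hookrightarrow C^*(G,-)\otimes\End(V)$ built into the very definition of the twisted crossed products.

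For the normalisation, $1_A\in KK_G(A,A)$ is represented by the cycle $(A,0)$ with $A$ in its canonical $(A,A)$-Hilbert-bimodule structure and zero operator. Direct inspection shows that applying $-\G$ to this cycle produces the Banach pair $(\overline{A\G},A\G)$ with zero operator, which is the tautological cycle representing $1_{A\G}$ in $KK^{\mathrm{ban}}$. The only nontrivial ingredient in this plan is the natural isomorphism appearing in the functoriality in $B$; once that is in hand, the rest is a purely formal verification.
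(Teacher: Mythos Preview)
Your treatment of functoriality in $A$ and of the normalisation $j_\rho(1_A)=1_{A\G}$ is fine and matches the paper's approach. The gap is in functoriality in $B$.

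You assert a canonical \emph{isomorphism} of Banach $(A\G,B_2\G)$-pairs
\[
(E\otimes_\phi B_2)\G \;\cong\; (E\G)\otimes^\pi_{\phi\G}(B_2\G),
\]
and propose to deduce it from Kasparov's identity $C^*(G,E)\otimes_{C^*(G,B_1)}C^*(G,B_2)\simeq C^*(G,E\otimes_{B_1}B_2)$ by tensoring with $\End(V)$ and using the isometric embeddings $-\G\hookrightarrow C^*(G,-)\otimes\End(V)$. This does not work. Kasparov's isomorphism is an isomorphism of \emph{Hilbert} modules, where the tensor product carries the norm coming from the $B_2$-valued inner product. On the Banach side you are taking the \emph{projective} tensor product $\otimes^\pi$, whose norm is in general strictly larger; the isometric embeddings into $C^*(G,-)\otimes\End(V)$ do not identify $(E\G)\otimes^\pi_{B_1\G}(B_2\G)$ with any Hilbert-module tensor product. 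What one actually gets from this argument is only a contractive map
\[
\tau:(E\G)\otimes^\pi_{\widetilde{B_1\G}}\widetilde{B_2\G}\longrightarrow (E\otimes_{B_1}B_2)\G,
\]
with no control on injectivity or closed range. So the two cycles $(\phi\G)_*(j_\rho(E,T))$ and $j_\rho(\phi_*(E,T))$ are \emph{not} known to be isomorphic.

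The paper resolves this by building a \emph{homotopy} rather than an isomorphism: it forms the mapping cylinder of $\tau$ (and of its companion $\overline\tau$), equips it with the operator coming from $T$, and then proves---via the same compactness machinery (the $\widehat S$ construction of Lemme~\ref{compacts}) already used in Lemme~\ref{bimodtordu}---that the resulting pair is a cycle in $E^{\mathrm{ban}}(A\G,(B_2\G)[0,1])$ interpolating between the two endpoints. This is the substantive part of the argument, and it is precisely what is missing from your plan.
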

\begin{proof}

On voit facilement que $j_{\rho}(1_A)=1_{A\G}$. Montrons maintenant que $j_{\rho}$ est fonctoriel. La démonstration pour $j_{r,\rho}$ est complètement analogue.\\
Soit $\theta_1$ un morphisme de $G$-$C^*$-algèbres de $A_1$ dans $A$. Notons $\theta_1\rtimes^{\rho} G$ le morphisme d'algèbres de Banach  de $A_1\G$ dans $A\G$ qu'il définit. Il est facile de voir que pour tout $\alpha\in KK_G(A,B)$ on a
$$j_{\rho}(\theta_1^*(\alpha))=(\theta_1\G)^*(j_{\rho}(\alpha)),$$
ce qui donne la fonctorialité en $A$.\\
Soit maintenant $\theta:B\rightarrow C$ un morphisme de $G$-$C^*$-algèbres. On va montrer que pour tout $\alpha\in KK_G(A,B)$,
$$j_{\rho}(\theta_*(\alpha))=(\theta\G)_*(j_{\rho}(\alpha)),$$
dans $KK^{\mathrm{ban}}(A\G,C\G)$.\\
Soit $(E,T)$ un représentant de $\alpha$ dans $E_G(A,B)$. Par définition (cf. \cite[Section 1.1]{Lafforgue02}),
\begin{align*}
(\theta\G)_*(j_{\rho}(E))^>&=E\G\otimes^{\pi}_{\widetilde{B\G}}\widetilde{C\G},\\
(\theta\G)_*(j_{\rho}(E))^<&=\widetilde{C\G}\otimes^{\pi}_{\widetilde{B\G}}\overline{E}\G,
\end{align*}
où $\otimes^{\pi}$ est le produit tensoriel projectif,
et,
\begin{align*}
j_{\rho}(\theta_*(E))^>=(E\otimes_{B}C)\G,\\
j_{\rho}(\theta_*(E))^<=(\overline{E\otimes_{B}C})\G,
\end{align*}
où $\otimes$ est le produit tensoriel interne de modules hilbertiens.\\
On note \begin{align*}
&(\theta\G)_*(j_{\rho}(E)):=(\theta\G)_*(j_{\rho}(E))^>\quad\hbox{et}\quad j_{\rho}(\theta_*(E)):=j_{\rho}(\theta_*(E))^>,\\
 &(\theta\G)_*(j_{\rho}(\overline{E})):=(\theta\G)_*(j_{\rho}(E))^<\quad\hbox{et}\quad j_{\rho}(\theta_*(\overline{E})):=j_{\rho}(\theta_*(E))^<,
\end{align*} pour simplifier les notations.\\
D'après \cite[Lemma 3.10]{Kasparov88}, l'application
\begin{align*}
\tau: C_c(G,E)\otimes C_c(G,C)&\rightarrow C_c(G,E\otimes C)\\
x\otimes c&\mapsto\big(g\mapsto\int_Gx(s)\otimes s c(s^{-1}g)ds\big),
\end{align*}
définit un isomorphisme de $C^*(C,G)$-modules hilbertiens
$$C^*(G,E)\otimes_{C^*(G,B)} C^*(G,C)\rightarrow C^*(G,E\otimes_{B}C).$$

On a alors,
\begin{align*}
 \|\tau(x\otimes c)\|_{(E\otimes_{B}C)\G}&=\|\tau(x\otimes c)\|_{C^*(G,E\otimes_{B}C)\otimes\mathrm{End}(V)}\\
&\leq\|x\|_{C^*(G,E)\otimes\mathrm{End}(V)}\|c\|_{C^*(G,C)\otimes\mathrm{End}(V)}
\end{align*}
ce qui implique que $\|\tau(x\otimes c)\|_{(E\otimes_{B}C)\G}\leq\|x\otimes c\|_{(E\G\otimes^{\pi}_{\widetilde {B\G}}\widetilde{C\G})}$.\\
L'application $\tau$ définit alors un morphisme de $C\G$-modules de Banach à droite de norme inférieure ou égale à $1$:
$$\tau:(E\G)\otimes^{\pi}_{\widetilde{B\G}}(\widetilde{C\G})\rightarrow (E\otimes_{B}C)\G,$$
que l'on note encore $\tau$ par abus de notation. On note $\overline{\tau}$ l'analogue de $\tau$ pour $\overline{E}$.\\
%Le cône de cette application réalise l'homotopie que nous cherchons.
On va alors construire l'homotopie cherchée à l'aide de cônes de la manière suivante (cf. \cite[Section 1.9]{Paravicini06}).\\
Soit $$E\rtimes^{\rho}_{\theta}G=\{(h,x)\in j_{\rho}(\theta_*(E))[0,1]\times (\theta\G)_*(j_{\rho}(E)) | h(0)=\tau(x)\},$$
muni de la norme $\|(h,x)\|=\max\big(\sup\limits_{t\in[0,1]}\|h(t)\|,\|x\|\big)$, le cône associé à $\tau$.\\
De même, on définit
$$\E\rtimes^{\rho}_{\theta}G=\{(h,x)\in j_{\rho}(\theta_*(\E))[0,1]\times(\theta\G)_*(j_{\rho}(\E)) | h(0)=\overline{\tau}(x)\},$$
qui est le cône associé à $\overline{\tau}$.\\
Alors le couple $$(\E\rtimes^{\rho}_{\theta}G,E\rtimes^{\rho}_{\theta}G),$$
définit un $\big(A\G,(C\G)[0,1]\big)$-bimodule de Banach que l'on note $E\rtimes^{\rho}_{\theta}G$ par abus de notation.\\

D'autre part, on définit un opérateur $T\rtimes^{\rho}_{\theta}G\in\mathcal{L}(E\rtimes^{\rho}_{\theta}G)$ de la façon suivante
\begin{align*}
 (T\rtimes^{\rho}_{\theta}G)^>(h,e\otimes c)&=\big(t\mapsto(\theta_*(T)\G)^>h(t),(\theta\G)_*(T\G)^>(x)\big)\\
&=\Big((g,t)\mapsto\theta_*(T)^>(h(t)(g)),(g\mapsto T^>(e(g)))\otimes c\Big),
\end{align*}
pour $(h,e\otimes c)\in E\rtimes^{\rho}_{\theta}G$, et on définit $(T\rtimes^{\rho}_{\theta}G)^<$ de façon analogue.\\
\begin{Rem}
L'opérateur $(T\rtimes^{\rho}_{\theta}G)$ ainsi défini est le ``cône'' du couple d'opérateurs $\big((\theta\rtimes^{\rho}G)_*(T\rtimes^{\rho}G),\theta_*(T)\rtimes^{\rho}G\big)$, noté $$Z\Big((\theta\rtimes^{\rho}G)_*(T\rtimes^{\rho}G),\theta_*(T)\rtimes^{\rho}G\Big)$$ et défini dans \cite[Definition 1.9.14]{Paravicini06}.
\end{Rem}
\begin{Lemma}\label{homotopie}
\sloppy L'élément $(E\rtimes^{\rho}_{\theta}G,T\rtimes^{\rho}_{\theta}G)$ appartient à $E^{\mathrm{ban}}(A\G,(C\G)[0,1])$. De plus, il réalise une homotopie entre $j_{\rho}(\theta_*(\alpha))$ et $(\theta\G)_*(j_{\rho}(\alpha))$.
\end{Lemma}
\begin{proof}

On doit montrer que pour tout $a\in A\rtimes^{\rho} G$ et pour tout $g\in G$, les opérateurs suivants $$[a,(T\rtimes^{\rho}_{\theta}G)],\quad a(1-(T\rtimes^{\rho}_{\theta}G)^2)\quad\text{et}\quad a(g(T\rtimes^{\rho}_{\theta}G)-T\rtimes^{\rho}_{\theta}G)$$
sont des opérateurs compacts de $E\rtimes^{\rho}_{\theta}G$.\\

Soient $a\in A\rtimes^{\rho} G$ et $g\in G$. Pour $S=(S_t)_{t\in G}\in C_c(G,\mathcal{K}(E))$, on note $\theta_*(S):=(\theta_*(S)_t)_{t\in G}$ l'élément de $C_c(G,\mathcal{K}(\theta_*(E))$ tel que $\theta_*(S)_t=\theta_*(S_t)$.\\
On rappelle que, étant donné $S=(S_t)_t\in C_c(G,\mathcal{K}(E))$, on note $\widehat{S}$ l'élément de $\mathcal{L}_{B\G}(E\G)$ associé à $S$ et donné par la définition \ref{Stilda}.\\

On considère l'application suivante
\begin{align*}
\psi:C_c(G,\mathcal{K}(E))&\rightarrow\mathcal{L}(E\rtimes^{\rho}_{\theta}G)\\
S&\mapsto\Big((h,x)\mapsto\big(t\mapsto\widehat{\theta_*(S)}(h(t)),(\theta\G)_*(\widehat{S})x\big)\Big).
\end{align*}

\begin{Lemma}\label{imagecompacte}
L'application $\psi$ induit un morphisme d'algèbre de Banach de $\mathcal{K}(E)\G$ dans $\mathcal{L}(E\rtimes^{\rho}_{\theta}G)$, que l'on note $\psi$ par abus de notation et dont l'image est contenue dans $\mathcal{K}(E\rtimes^{\rho}_{\theta}G)$.
\end{Lemma}
Avant de démontrer le lemme \label{imagecompacte}, remarquons que le lemme suivant implique le lemme \ref{homotopie}
\begin{Lemma}\label{dansimage}
Les opérateurs $$[a,(T\rtimes^{\rho}_{\theta}G)],\quad a(1-(T\rtimes^{\rho}_{\theta}G)^2)\quad\text{et}\quad a(g(T\rtimes^{\rho}_{\theta}G)-T\rtimes^{\rho}_{\theta}G)$$
appartiennent à l'image de $\psi$.
\end{Lemma}
\begin{proof}[Démonstration du lemme \ref{dansimage}]
Soient $S_1,S_2$ et $S_3$ les éléments de $C_c(G,\mathcal{K}(E))$ donnés par le lemme \ref{opS} tels que:
$$\widehat{S_1}=[a,T\G]\!\!\!\quad,\quad\!\!\!\widehat{S_2}=a(1-(T\G)^2)\!\!\!\quad\hbox{et}\quad\!\!\!\widehat{S_3}=a(g(T\G)-T\G).$$

Il est facile de vérifier les égalités suivantes
\begin{align*}
&\psi(S_1)=[a,(T\rtimes^{\rho}_{\theta}G)],\\
&\psi(S_2)=a(1-(T\rtimes^{\rho}_{\theta}G)^2)\\
\hbox{et}\quad&\psi(S_3)=a(g(T\rtimes^{\rho}_{\theta}G)-T\rtimes^{\rho}_{\theta}G).
\end{align*}
En effet, on a par exemple, pour $x\in C_c(G,E)$ et $\xi\in C_c(G,\E)$,
\begin{align*}
 [a,\theta_*(T)\G]^>x(t)&=\int_G(\theta_*(S_{1,s})^>)(s(x(s^{-1})))ds,\\
 [a,\theta_*(T)\G]^<\xi(t)&=\int_G(s^{-1}t)^{-1}(\theta_*(S_{1,s^{-1}t})^<)(\xi(s))ds,
\end{align*}
pour tout $t\in G$, donc, $$[a,\theta_*(T)\G]=\widehat{\theta_*(S_1)}.$$
De même,
\begin{align*}
 [a, (\theta\rtimes^{\rho}G)_*(T\rtimes^{\rho}G)]=(\theta\G)_*(\widehat{S_1}).\\
\end{align*}
Et donc, $[a,(T\rtimes^{\rho}_{\theta}G)]$, $a(1-(T\rtimes^{\rho}_{\theta}G)^2)$ et $a(g(T\rtimes^{\rho}_{\theta}G)-T\rtimes^{\rho}_{\theta}G)$ appartiennent à l'image de $\psi$.
\end{proof}

Le lemme \ref{imagecompacte} implique alors que les opérateurs $$[a,(T\rtimes^{\rho}_{\theta}G)],\quad a(1-(T\rtimes^{\rho}_{\theta}G)^2)\quad\hbox{et}\quad a(g(T\rtimes^{\rho}_{\theta}G)-T\rtimes^{\rho}_{\theta}G)$$ appartiennent à $\mathcal{K}(E\rtimes^{\rho}_{\theta}G)$. Ceci implique que $(E\rtimes^{\rho}_{\theta}G,T\rtimes^{\rho}_{\theta}G)$ appartient à $E^{\mathrm{ban}}(A\G,(C\G)[0,1])$. Il est clair qu'il réalise une homotopie entre $j_{\rho}(\theta_*(\alpha))$ et $(\theta\G)_*(j_{\rho}(\alpha))$ ce qui termine la démonstration du lemme \ref{homotopie}.

\end{proof}

\begin{proof}[Démonstration du lemme \ref{imagecompacte}]

Pour tout $S=(S_t)_t\in C_c(G,\mathcal{K}(E))$, on a
$$\|\psi(S)\|_{\scriptscriptstyle\mathcal{L}(E\rtimes^{\rho}_{\theta}G)}\leq\max\Big(\|\widehat{\theta_*(S)}\|_{\scriptscriptstyle\mathcal{L}(\theta_*(E)\G)},\|(\theta\G)_*(\widehat{S})\|_{\scriptscriptstyle\mathcal{L}((\theta\G)_*(E\G))}\Big).$$
De plus, on a les estimations suivantes
\begin{align*}
 \|(\theta\G)_*(\widehat{S})\|_{\scriptstyle\mathcal{L}((\theta\G)_*(E\G))}&=\|\widehat{S}\otimes 1\|_{\mathcal{L}(E\G\otimes^{\pi}_{\widetilde {B\G}}\widetilde{C\G})},\\
&\leq\|S\|_{\mathcal{K}(E)\G},
\end{align*}
et
\begin{align*}
 \|\widehat{\theta_*(S)}\|_{\mathcal{L}(\theta_*(E)\G)}&=\|\theta_*(S)\|_{\mathcal{K}(\theta_*(E))\G},\\
&\leq\|S\|_{\mathcal{K}(E)\G}.
\end{align*}
Ceci implique que $$\|\psi(S)\|_{\scriptscriptstyle\mathcal{L}(E\rtimes^{\rho}_{\theta}G)}\leq\|S\|_{\mathcal{K}(E)\G},$$et donc que l'application $\psi$ induit bien un morphisme d'algèbre de Banach de $\mathcal{K}(E)\G$ dans $\mathcal{L}(E\rtimes^{\rho}_{\theta}G)$, que l'on note encore $\psi$ par abus de notation. Montrons maintenant que l'image de $\psi$ est contenue dans $\mathcal{K}(E\rtimes^{\rho}_{\theta}G)$. \\
Soient $S\in C_c(G,\mathcal{K}(E))$ et $\epsilon>0$. D'après le lemme \ref{compacts}, il existe $n\in\NN$ et pour $i=1,...,n$, il existe $y_i\in C_c(G,E)$, $\xi_i\in C_c(G,\overline{E})$ tels que l'élément $K=(K_g)_{g\in G}\in C_c(G,\mathcal{K}(E))$ défini par la formule
$$K_g=\int_G\sum\limits_{i=1}^{n}|y_i(t)\rangle\langle g(\xi_i(t^{-1}g))|dt,$$
vérifie l'inégalité
$$\int_G\|S_g-K_g\|_{\mathcal{K}(E)}\|\rho(g)\|_{\mathrm{End}(V)}dg<\epsilon.$$

L'image par $\psi$ de $K$ est un opérateur compact de $E\rtimes^{\rho}_{\theta}G$. En effet,
pour tout $s\in G$, $\theta_*(K_s)=\int_G\sum\limits_{i=1}^{n}|y_i(t)\otimes 1\rangle\langle s(1\otimes\xi_i(t^{-1}s)|dt$, et donc, pour tout $x\in C_c(G,\theta_*(E)^>)$,
\begin{align*}
 \widehat{\theta_*(K)}^>(x)(g)&=\int_G\theta_*(K_s)^>s(x(s^{-1}g))ds,\\
&=\int_G\int_G\sum\limits_{i=1}^{n}|y_i(t)\otimes 1\rangle\langle s(1\otimes\xi_i(t^{-1}s))|^>s(x(s^{-1}g))dtds,\\
&=\sum\limits_{i=1}^{n}\big(|g\mapsto y_i(g)\otimes 1\rangle\langle g\mapsto1\otimes\xi_i(g)|^>(x)\big)(g).
\end{align*}
De façon analogue, pour $\xi\in C_c(G,\theta_*(E)^<)$, on trouve
\begin{align*}
\widehat{\theta_*(K)}^<(\xi)(g)&=\int_G(s^{-1}g)^{-1}\theta_*(K_{s^{-1}g})^<(\xi(s))ds,\\
&=\sum\limits_{i=1}^{n}\big(|g\mapsto y_i(g)\otimes 1\rangle\langle g\mapsto1\otimes\xi_i(g)|^<(\xi)\big)(g);
\end{align*}
d'où, $$\widehat{\theta_*(K)}=\sum\limits_{i=1}^{n}|g\mapsto y_i(g)\otimes 1\rangle\langle g\mapsto1\otimes\xi_i(g)|.$$
De plus,
\begin{align*}
(\theta\G)_*(\widehat{K})&=(1\otimes\widehat{K}^<,\widehat{K}^>\otimes 1),\\
&=\sum\limits_{i=1}^n|y_i\otimes1\rangle\langle1\otimes\xi_i|,
\end{align*}
et donc pour $(h,x)\in E\rtimes_{\theta}^{\rho}G$,
\begin{align*}
&\psi(K)^>(h,x)=\\
&\Big(t\!\mapsto\!\!\sum\limits_{i=1}^{n}|g\mapsto y_i(g)\otimes 1\rangle\langle g\mapsto1\otimes\xi_i(g)|^>(h(t)), \sum\limits_{i=1}^n|y_i\otimes1\rangle\langle1\otimes\xi_i|^>x\Big),\\
&\,\,=\sum\limits_{i=1}^{n}\Big|\big(t\mapsto (g\mapsto y_i(g)\otimes 1),y_i\otimes 1\big)\Big\rangle\Big\langle\big(t\mapsto (g\mapsto 1\otimes\xi_i(g)),1\otimes\xi_i\big)\Big|.
\end{align*}
On en déduit que $\psi(K)\in \mathcal{K}(E\rtimes_{\theta}^{\rho}G)$.\\

Le fait que $\psi$ soit un morphisme d'algèbres de Banach, implique alors que $\psi(S)$ est un opérateur compact de $E\rtimes^{\rho}_{\theta}G$. En effet, on a les inégalités suivantes:
\begin{align*}
\|\psi(S)-\psi(K)\|_{\mathcal{L}(E\rtimes^{\rho}_{\theta}G)}&\leq\|S-K\|_{\mathcal{K}(E)\G},\\
&\leq\int_G\|S_g-K_g\|_{\mathcal{K}(E)}\|\rho(g)\|_{\mathrm{End}(V)}<\epsilon.
\end{align*}
\end{proof}

Ceci termine la démonstration de la proposition \ref{descente} et donc de la fonctorialité des morphismes de descente tordus.
\end{proof}

\subsection{Descente et action de $KK^{\mathrm{ban}}$ sur la $K$-théorie.}
Soient $A$ et $B$ deux algèbres de Banach. On rappelle que dans \cite{Lafforgue02}, Lafforgue à construit un morphisme de groupes $$\Sigma:KK^{\mathrm{ban}}(A,B)\rightarrow \mathrm{Hom}(K(A),K(B)),$$qui induit une action de la $KK$-théorie banachique sur la $K$-théorie. La proposition suivante montre que les morphismes de descente tordus sont compatibles avec $\Sigma$.

\begin{Pro}\label{compatib}
\sloppy Soient $G$ un groupe localement compact, $A$, $B$, $C$ des $G$-$C^*$-algèbres,  $\alpha\in KK_G(A,B)$, $\beta\in KK_G(B,C)$, et $\alpha\otimes_B\beta$ leur produit de Kasparov qui est un élément de $KK_G(A,B)$ (cf. \cite{Kasparov88}). On a alors,
$$\Sigma(j_{\rho}(\alpha\otimes_B\beta))=\Sigma(j_{\rho}(\beta))\circ\Sigma(j_{\rho}(\alpha))$$
dans $\mathrm{Hom}(K(A\G),K(B\G))$. De même dans le cas du produit croisé réduit.
\end{Pro}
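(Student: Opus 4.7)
The strategy is to reduce the proposition to known multiplicativity properties of Lafforgue's Banach $KK$-theory, rather than trying to establish a direct formula $j_\rho(\alpha\otimes_B\beta)=j_\rho(\alpha)\otimes_{B\rtimes^{\rho}G}j_\rho(\beta)$ at the level of $KK^{\mathrm{ban}}$ (which is not immediate and indeed might fail without passing to the $K$-theoretic action). The idea is to factor $j_\rho$ through $\iota$ and then borrow Lafforgue's results.

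First I would construct a Banach analogue of the twisted descent: for a length $\ell$ on $G$, define
$$j^{\mathrm{ban}}_{\rho}:KK^{\mathrm{ban}}_{G,\ell}(A,B)\rightarrow KK^{\mathrm{ban}}(A\G,B\G)$$
by the same formulas as in Section \ref{Defdescente}, starting from a Banach $(G,\ell)$-equivariant cycle $\big((\overline{E},E),(S,T)\big)$ and forming the pair $\big((\overline{E}\G,E\G),(S\G,T\G)\big)$. The proof that this gives a Banach Kasparov cycle is identical to that of Lemma \ref{bimodtordu}, since Proposition \ref{morBancompacts} and Lemma \ref{compacts} used only that $B\G$ sits inside $C^*(G,B)\otimes\mathrm{End}(V)$ and that operators of the form $\widehat S$ are compact. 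By construction one has the factorisation
$$j_{\rho}=j^{\mathrm{ban}}_{\rho}\circ\iota,$$
where $\iota:KK_G(A,B)\rightarrow KK^{\mathrm{ban}}_{G,\ell}(A,B)$ is Lafforgue's map recalled in the notation paragraph.

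Next I would invoke three ingredients from \cite{Lafforgue02}. First, $\iota$ is multiplicative in the sense that $\iota(\alpha\otimes_B\beta)=\iota(\alpha)\otimes_B\iota(\beta)$, where the right-hand side is the Banach $KK$-product (cf.\ \cite[Proposition 1.6.10]{Lafforgue02}). Second, the Banach descent $j^{\mathrm{ban}}_{\rho}$ is multiplicative with respect to the Banach $KK$-product; the proof is a line-by-line adaptation of the argument given for unconditional completions in \cite[Proposition 1.5.7]{Lafforgue02}, since the only inputs are that $(-)\rtimes^{\rho} G$ is a functor on Banach $(G,\ell)$-algebras and pairs, turns internal tensor products into projective tensor products up to a controlled norm, and sends compact operators to compact operators (the last two points follow from Lemma \ref{compacts} and the construction of $E\G$). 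Third, $\Sigma$ converts the Banach $KK$-product into composition in $\mathrm{Hom}(K(\cdot),K(\cdot))$, by \cite[Th\'eor\`eme 1.2.8]{Lafforgue02}. Putting these together,
\begin{align*}
\Sigma\bigl(j_{\rho}(\alpha\otimes_B\beta)\bigr)
&=\Sigma\bigl(j^{\mathrm{ban}}_{\rho}(\iota(\alpha)\otimes_B\iota(\beta))\bigr)\\
&=\Sigma\bigl(j^{\mathrm{ban}}_{\rho}(\iota(\alpha))\otimes_{B\G}j^{\mathrm{ban}}_{\rho}(\iota(\beta))\bigr)\\
&=\Sigma\bigl(j_{\rho}(\beta)\bigr)\circ\Sigma\bigl(j_{\rho}(\alpha)\bigr).
\end{align*}

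The reduced case is entirely parallel, replacing $C^*(G,\cdot)$ by $C^*_r(G,\cdot)$ throughout and using that the regular representation is absorbing. The main obstacle I anticipate is verifying the multiplicativity of $j^{\mathrm{ban}}_{\rho}$ on Banach $KK$-products: although the argument parallels \cite[Proposition 1.5.7]{Lafforgue02}, one must check carefully that the natural map
$$E\G\otimes^{\pi}_{\widetilde{B\G}}F\G\longrightarrow(E\otimes_{B}F)\G$$
intertwines the relevant operators up to compacts, which is the technical heart of the argument and already appeared implicitly in the homotopy $E\rtimes^{\rho}_{\theta}G$ used in Proposition \ref{descente}.
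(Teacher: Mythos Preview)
Your proposal rests on the existence of a Kasparov-type product in $KK^{\mathrm{ban}}$, and this is a genuine gap: Lafforgue's Banach $KK$-theory does \emph{not} carry a general internal product. There is no well-defined element $\iota(\alpha)\otimes_B\iota(\beta)$ in $KK^{\mathrm{ban}}_{G,\ell}(A,C)$, nor an element $j_\rho(\alpha)\otimes_{B\rtimes^\rho G}j_\rho(\beta)$ in $KK^{\mathrm{ban}}(A\rtimes^\rho G,C\rtimes^\rho G)$, so the three ``ingredients'' you invoke are not available as stated. In particular, the reference you give for the multiplicativity of $\iota$, namely \cite[Proposition~1.6.10]{Lafforgue02}, is not a statement about products at all; it is precisely the decomposition lemma that the paper uses to avoid products. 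Likewise there is no result in \cite{Lafforgue02} asserting that $\Sigma$ turns a Banach $KK$-product into composition, because no such product is defined. The paragraph in which you anticipate the ``main obstacle'' (the comparison map $E\rtimes^\rho G\otimes^\pi_{\widetilde{B\rtimes^\rho G}}F\rtimes^\rho G\to(E\otimes_B F)\rtimes^\rho G$) is thus not a mere technicality but the whole difficulty: even granting that map, one would still need a connection-type construction to define the product operator, and the standard Kasparov techniques for doing this do not go through in the Banach setting.

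The paper's proof follows Lafforgue's own argument for unconditional completions (cf.\ \cite[Proposition~1.6.9]{Lafforgue02}) and bypasses the missing product entirely. One invokes the decomposition lemma (stated here as Lemma~\ref{morphisme}, which is \cite[Proposition~1.6.10]{Lafforgue02}): every $\alpha\in KK_G(A,B)$ factors, up to a $KK$-invertible morphism $\theta:A_1\to A$, as the class $[\eta]$ of an honest $*$-homomorphism $\eta:A_1\to B$. Functoriality of $j_\rho$ (Proposition~\ref{descente}) and of $\Sigma$ then shows that $j_\rho(\theta)_*$ is invertible on $K$-theory, and both $\Sigma(j_\rho(\alpha\otimes_B\beta))$ and $\Sigma(j_\rho(\beta))\circ\Sigma(j_\rho(\alpha))$ are identified with $\Sigma(j_\rho(\beta))\circ j_\rho(\eta)_*\circ j_\rho(\theta)_*^{-1}$. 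The moral is that in Banach $KK$-theory one never multiplies cycles; one reduces to morphisms and uses functoriality, which is exactly what Proposition~\ref{descente} provides.
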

\begin{proof}
 La démonstration découle de la fonctorialité de $j_{\rho}$ (resp. de $j_{\rho,r}$) et du lemme suivant démontré dans \cite[Proposition 1.6.10]{Lafforgue02} qui dit que tout élément de $KK$-théorie de Kasparov est le produit de Kasparov d'un élément qui provient d'un morphisme et d'un élément qui est l'inverse d'un morphisme.

\begin{Lemma}\label{morphisme}
 Soient $G$ un groupe localement compact, $A$ et $B$ deux $G$-$C^*$-algèbres et $\alpha\in KK_G(A,B)$. Il existe une $G$-$C^*$-algèbre $A_1$, des morphisme $G$-équivariants $\theta:A_1\rightarrow A$, $\eta:A_1\rightarrow B$, et un élément $\alpha_1\in KK_G(A,A_1)$ tels que:\\
\begin{enumerate}
\item$[\theta]\otimes_A\alpha_1=\mathrm{Id}_{A_1}$ et $\alpha_1\otimes_{A}[\theta]=\mathrm{Id}_A$, où $[\theta]\in KK_G(A_1,A)$ est induit par le morphisme $\theta$ (c'est-à-dire que $\alpha_1$ est l'inverse en $KK$-théorie $G$-équivariante d'un morphisme),   \\
et \item $\theta^{*}(\alpha)=[\eta]$, où $[\eta]\in KK_G(A_1,A)$ est l'élément induit par le morphisme $\eta$.
\end{enumerate}

\end{Lemma}
La fonctorialité du morphisme de descente $j_{\rho}$ et de l'action de $KK^{\mathrm{ban}}$ sur la K-théorie donnée par $\Sigma$ impliquent la proposition (\ref{compatib}). La démonstration est la même que celle de \cite[Proposition 1.6.9]{Lafforgue02}:
on applique le lemme (\ref{morphisme}) à $G, A, B$ et $\alpha$. Comme $j_{\rho}$ et $\Sigma$ sont fonctoriels on a
\begin{align*}
 \Sigma(j_{\rho}(\alpha_1))\circ j_{\rho}(\theta)_*&=\Sigma(j_{\rho}(\theta)^*(j_{\rho}(\alpha_1))),\\
&=\Sigma(j_{\rho}(\theta^*(\alpha_1))),\\
&=\mathrm{Id}_{K(A_1\G)},\,\,\,\hbox{car}\,\,\,\theta^*(\alpha_1)=\mathrm{Id}_{A_1}.
\end{align*}
De même,
\begin{align*}
 j_{\rho}(\theta)_*\circ\Sigma(j_{\rho}(\alpha_1))&=\Sigma(j_{\rho}(\theta)_*(j_{\rho}(\alpha_1)),\\
&=\Sigma(j_{\rho}(\theta_*(\alpha_1))),\\
&=\mathrm{Id}_{K(A\G)}\,\,\,\hbox{car}\,\,\,\theta_*(\alpha_1)=\mathrm{Id}_A.
\end{align*}
\sloppy Donc, $$j_{\rho}(\theta)_*:K(A_1\G)\rightarrow K(A\G)$$ est inversible. De plus, $\theta^*(\alpha\otimes_B \beta)=\eta^*(\beta)$ dans $KK_G(A_1,C)$ et donc,
$$\Sigma(j_{\rho}(\alpha\otimes\beta))\circ j_{\rho}(\theta)_*=\Sigma(j_{\rho}(\beta))\circ j_{\rho}(\eta)_*,$$
et ceci implique que
$$\Sigma(j_{\rho}(\alpha\otimes\beta))=\Sigma(j_{\rho}(\beta))\circ j_{\rho}(\eta)_*\circ j_{\rho}(\theta)_*^{-1}.$$
De la même façon, on montre, en prenant $C=B$ et $\beta=\mathrm{Id}$, que $$\Sigma(j_{\rho}(\alpha))=j_{\rho}(\eta)_*\circ j_{\rho}(\theta)_*^{-1}$$ Ceci implique la proposition.\\
La même démonstration vaut pour la descente dans le cas du produit croisé réduit.

\end{proof}

\subsection{Construction du morphisme tordu}
\sloppy Soit $G$ un groupe localement compact et $B$ une $G$-$C^*$-algèbre. On rappelle que l'on note $\underline{E}G$
 le classifiant universel pour les actions propres de $G$ et $K^{\mathrm{top}}(G,B)$ la $K$-homologie $G$-équivariante de $\underline{E}G$ à valeurs dans $B$. On rappelle que $K^{\mathrm{top}}(G,B)$ est donné par la formule suivante
$$K^{\mathrm{top}}(G,B)=\lim\limits_{\longrightarrow}KK_G(C_0(X),B),$$où la limite inductive est prise parmi les parties fermées $X$ de $\underline{E}G$ qui sont $G$-invariantes et $G$-compactes.

\begin{Def}\label{Mischenko}
Soit $X$ une partie fermée $G$-compacte de $\underline{E}G$. Soit $c$ une fonction continue à support compact sur $X$ et à valeurs dans $\RR_+$
telle que $\int_Gc(g^{-1}x)dg=1$, pour tout $x\in X$ (une fonction avec ces propriétés existe d'après \cite{Tu99} et elle est appelée ``fonction de cut-off'' sur $X$). Soit $p$ la fonction sur $G\times X$ définie par la formule $$p(g,x)=\sqrt{c(x)c(g^{-1}x)},$$
pour $(g,x)\in G\times X$.\\
La fonction $p$ définit alors un projecteur de $C_c(G,C_0(X))$, que l'on note $p$ par abus de notation. L'élément de $K(C_0(X)\G)$ qu'il définit est appelé \emph{élément de Mischenko tordu} associé à $X$. On le note $\Delta_{\rho}$.\\
Soit $$\Sigma: KK^{\mathrm{ban}}(C_0(X)\G,B\G)\rightarrow
\mathrm{Hom}(K( C_0(X)\G),K(B\G))$$ le morphisme provenant de
l'action de
 $KK^{\mathrm{ban}}$ sur la $K$-théorie défini dans \cite{Lafforgue02}. On a alors une suite de morphismes
$$KK_G(C_0(X),B)\stackrel{j_{\rho}}{\rightarrow}KK^{\mathrm{ban}}(C_0(X)\G,B\G)\stackrel{\Sigma(.)(\Delta_{\rho})}
{\longrightarrow}K(B\G).$$
De même que dans \cite{Lafforgue02} section 1.7, en passant à la limite inductive on définit un morphisme
$$\mu_{\rho}^{B}:K^{\mathrm{top}}(G,B)\rightarrow K(B\G),$$
et on l'appelle \emph{morphisme de Baum-Connes tordu par la représentation $\rho$}.
\end{Def}
\begin{Rem}
La fonctorialité des morphismes $\Sigma$ et $j_{\rho}$ implique que le morphisme de Baum-Connes tordu par $\rho$ est fonctoriel en $B$. En effet, soit $\theta:B\rightarrow C$ un morphisme de $G$-$C^*$-algèbres. Soit $\alpha\in KK_G(C_0(X),B)$. On a les égalités suivantes,
\begin{align*}
(\theta\G)_*(\mu_{\rho}^{B}(\alpha))&=(\theta\G)_*(\Sigma(j_{\rho}(\alpha))(\Delta_{\rho})),\\
&=\Sigma\big((\theta\G)_*(j_{\rho}(\alpha))\big)(\Delta_{\rho}),\\
&=\Sigma\big(j_{\rho}(\theta_*(\alpha))\big)(\Delta_{\rho}),\\
&=\mu_{\rho}^C(\theta_*(\alpha)).
\end{align*}
\end{Rem}
\begin{Def}
 Pour toute $G$-$C^*$-algèbre $B$, soit $\lambda_{G,B}^\rho$ le morphisme d'algèbres de Banach de $B\G$ dans $B\rG$ qui prolonge l'identité sur $C_c(G,B)$. Soit $$(\lambda_{G,B}^{\rho})_*:K(B\G)\rightarrow K(B\rG)$$ le morphisme induit par $\lambda_{G,B}^{\rho}$ en $K$-théorie. On définit un \emph{morphisme de Baum-Connes tordu réduit},
$$\mu_{\rho,r}^B:K^{\mathrm{top}}(G,B)\rightarrow K(B\rG),$$
en posant $\mu_{\rho,r}^B:=(\lambda_{G,B}^{\rho})_*\circ\mu_{\rho}^B$.
\end{Def}
\begin{Rem}\label{casréduit1}
 Il est facile de voir que, si $X$ est une partie $G$-compacte de $\underline{E}G$ et si on note $\Delta_{\rho,r}$ l'élément de $K(C_0(X)\rG)$ défini par la fonction $p$, alors le morphisme de Baum-Connes tordu réduit vérifie l'égalité $$\mu_{\rho,r}^B(x)=\Sigma\big(j_{\rho,r}(x)\big)(\Delta_{\rho,r}),$$
pour tout $x\in KK_G(C_0(X),B)$.
\end{Rem}

\subsection{Compatibilité avec la somme directe de représentations}
On va maintenant montrer que le morphisme de Baum-Connes tordu est compatible avec la somme directe de représentations. On utilisera ce résultat dans l'étude de la bijectivité.
\begin{Lemma}\label{commutatifs}
 Soient $\rho$ et $\rho'$ deux représentations de dimension finie de $G$ et $B$ une $G$-$C^*$-algèbre. Alors il existe des morphismes
  $$i_{\rho'}:B\rtimes^{\rho\oplus\rho'}G\rightarrow B\rtimes^{\rho'}G\quad\hbox{et}\quad i_{\rho}:B\rtimes^{\rho\oplus\rho'}G\rightarrow B\G,$$
  qui prolongent l'identité sur $C_c(G,B)$ et tels que les diagrammes suivants,

$$\xymatrix{
    K^{\mathrm{top}}(G,B)\ar[dr]_{\mu_{\rho'}^B}\ar[r]^{\mu_{\rho\oplus\rho'}^B} & K(B\rtimes^{\rho\oplus\rho'}G)\ar[d]^{i_{\rho',*}}\\
     & K(B\rtimes^{\rho'}G),\\}\quad\hbox{et}\quad
\xymatrix{
    K^{\mathrm{top}}(G,B)\ar[dr]_{\mu_{\rho}^B}\ar[r]^{\mu_{\rho\oplus\rho'}^B} & K(B\rtimes^{\rho\oplus\rho'}G)\ar[d]^{i_{\rho,*}}\\
     & K(B\G),\\}$$
soient commutatifs. On a le même résultat dans le cas des produits croisés tordus réduits.
\end{Lemma}
\begin{proof}
 Il est clair que pour toute $G$-$C^*$-algèbre $A$ et pour toute fonction $f\in C_c(G,A)$,
$$\|f\|_{A\G}\leq\|f\|_{A\rtimes^{\rho\oplus\rho'}G},$$
donc $\mathrm{Id}_{C_c(G,A)}$ s'étend en un morphisme d'algèbres de Banach $$i_{\rho}:A\rtimes^{\rho\oplus\rho'}G\rightarrow A\G.$$
En fait, on a que $A\rtimes^{\rho\oplus\rho'}G=A\G\cap A\rtimes^{\rho'}G$, à équivalence de norme près.\\
 De même, si $E$ est un $A$-module de Banach et $f\in C_c(G,E)$,
$$\|f\|_{E\G}\leq\|f\|_{E\rtimes^{\rho\oplus\rho'}G},$$ et $\mathrm{Id}_{C_c(G,E)}$ s'étend en un morphisme de $A$-modules de Banach, que l'on note aussi $i_{\rho}$, de $E\rtimes^{\rho\oplus\rho'}G$ dans $E\G$.\\
 On doit montrer que $$\mu^B_{\rho}=i_{\rho,*}\circ\mu^B_{\rho\oplus\rho'}.$$
Pour ceci, on va d'abord montrer que, pour tout $\alpha\in KK_G(A,B)$, les éléments $j_{\rho}(\alpha)$ et $j_{\rho\oplus\rho'}(\alpha)$ ont la même image dans le groupe $KK^{\mathrm{ban}}(A\rtimes^{\rho\oplus\rho'}G,B\G)$. C'est le lemme suivant
\begin{Lemma}\label{homotopiedescente}
Pour tout $\alpha\in KK_G(A,B)$, on a l'égalité, $$i^*_{\rho}(j_{\rho}(\alpha))=i_{\rho,*}(j_{\rho\oplus\rho'}(\alpha)),$$
dans $KK^{\mathrm{ban}}(A\rtimes^{\rho\oplus\rho'}G,B\G)$.
\end{Lemma}
Avant de démontrer le lemme \ref{homotopiedescente}, remarquons qu'il implique le lemme \ref{commutatifs}. En effet, pour $X$ une partie $G$-compacte de $\underline{E}G$ et pour un élément $\alpha$ dans $KK_G(C_0(X),B)$, $$\mu^B_{\rho\oplus\rho'}(\alpha)=\Sigma(j_{\rho\oplus\rho'}(\alpha))(\Delta_{\rho\oplus\rho'}),$$ donc, on a les égalités suivantes,
\begin{align*}
i_{\rho,*}(\mu^B_{\rho\oplus\rho'}(\alpha))&=i_{\rho,*}\circ\Sigma(j_{\rho\oplus\rho'}(\alpha))(\Delta_{\rho\oplus\rho'}),\\
&=\Sigma(i_{\rho,*}(j_{\rho\oplus\rho'}(\alpha)))(\Delta_{\rho\oplus\rho'}),\\
&=\Sigma(i_{\rho}^*(j_{\rho}(\alpha)))(\Delta_{\rho\oplus\rho'}),\\
&=\Sigma(j_{\rho}(\alpha))(i_{\rho,*}(\Delta_{\rho\oplus\rho'})),\\
&=\Sigma(j_{\rho}(\alpha))(\Delta_{\rho})=\mu^B_{\rho}(\alpha).
\end{align*}
On a alors que $\mu_{\rho}^B=i_{\rho,*}\circ\mu^B_{\rho\oplus\rho'}$, pour toute $G$-$C^*$-algèbre $B$, ce qui termine la démonstration du lemme \ref{commutatifs}.

\end{proof}

\begin{proof}[Démonstration du lemme \ref{homotopiedescente}]
Soit $(E,T)$ un représentant de $\alpha$ dans $E_G(A,B)$. Alors
$$i^*_{\rho}(j_{\rho}(E))=E\G,$$ où $E\G$ est un $(A\rtimes^{\rho\oplus\rho'}G,B\G)$-bimodule de Banach, l'action de $A\rtimes^{\rho\oplus\rho'}G$ étant donnée par $i_{\rho}$, et
\begin{align*}
i_{\rho,*}(j_{\rho\oplus\rho'}(E))^>&=(E\rtimes^{\rho\oplus\rho'}G)^>\otimes^{\pi}_{\widetilde{B\rtimes^{\rho\oplus\rho'}G}}\widetilde{(B\G)},\\
i_{\rho,*}(j_{\rho\oplus\rho'}(E))^<&=\widetilde{(B\G)}\otimes^{\pi}_{\widetilde{B\rtimes^{\rho\oplus\rho'}G}}(E\rtimes^{\rho\oplus\rho'}G)^<.                                                                                                               \end{align*}

On considère l'application suivante,
\begin{align*}
\tau:C_c(G,E)\otimes_{C_c(G,B)}C_c(G,B)&\rightarrow C_c(G,E)\\
x\otimes b&\mapsto xb.
\end{align*}
On a alors,
\begin{align*}
 \|\tau(x\otimes b)\|_{E\G}&\leq \|x\|_{E\G}\|b\|_{B\G},\\
&\leq\|x\|_{E\rtimes^{\rho\oplus\rho'}G}\|b\|_{B\G},
\end{align*}
et donc $\tau$ définit une application,
\begin{align*}
(E\rtimes^{\rho\oplus\rho'}G)\otimes^{\pi}_{\widetilde{B\rtimes^{\rho\oplus\rho'}G}}\widetilde{B\G}&\rightarrow E\G,
\end{align*}
que l'on note encore $\tau$ par abus de notation.\\
Comme $$\|\tau(x\otimes b)\|_{E\G}\leq\|x\otimes b\|_{(E\rtimes^{\rho\oplus\rho'}G)\otimes^{\pi}_{\widetilde{B\rtimes^{\rho\oplus\rho'}G}}(\widetilde{B\G})},$$ l'application $\tau$ définit un morphisme de $B\G$-modules de Banach à droite de norme inférieure ou égale à $1$.\\
De même, il existe un morphisme de $B\G$-modules de Banach (à gauche)
$$\overline{\tau}:\widetilde{B\G}\otimes^{\pi}_{\widetilde{B\rtimes^{\rho\oplus\rho'}G}} (\E\rtimes^{\rho\oplus\rho'}G)\rightarrow \E\G,$$
de norme inférieure ou égale à $1$.\\
On va construire une homotopie entre $(i_{\rho,*}(j_{\rho\oplus\rho'}(\alpha))$ et $j_{\rho}(\alpha)$ en utilisant les cônes associés à ces morphismes.

Soit,
$$\mathcal{C}(\tau)^>=\{(h,x)\in (E\G)[0,1]\times i_{\rho,*}(j_{\rho\oplus\rho'}(E))^> | h(0)=\tau(x)\},$$
le cône associé à $\tau$, et
$$\mathcal{C}(\tau)^<=\{(h,x)\in (\overline{E}\G)[0,1]\times i_{\rho,*}(j_{\rho\oplus\rho'}(E))^< | h(0)=\overline{\tau}(x)\},$$
le cône associé à $\overline{\tau}$.\\
On pose $$\mathcal{C}(\tau):=(\mathcal{C}(\tau)^<,\mathcal{C}(\tau)^>),$$ qui est un $\big(A\rtimes^{\rho\oplus\rho'}G,(B\G)[0,1]\big)$-bimodule de Banach.

Soit $\mathcal{C}(\tau, T)\in\mathcal{L}(\mathcal{C}(\tau))$, l'opérateur sur $\mathcal{C}(\tau)$ défini par la formule suivante
\begin{align*}
\mathcal{C}(\tau,T)^>(h,e\otimes b)=&\Big(t\mapsto(T\G)(h(t)),\big((T\rtimes^{\rho\oplus\rho'}G)^>e\otimes b\big)\Big),
\end{align*}
pour $(h,e\otimes b)\in\mathcal{C}(\tau)^>$ et $\mathcal{C}(\tau,T)^<$ défini de façon analogue sur $\mathcal{C}(\tau)^>$. On a alors le lemme suivant

\begin{Lemma}\label{presquehomot}
 L'élément $(\mathcal{C}(\tau),\mathcal{C}(\tau,T))$ défini ci-dessus est un élément de $E^{\mathrm{ban}}\big(A\rtimes^{\rho\oplus\rho'}G,(B\G)[0,1]\big)$.
\end{Lemma}

\begin{proof}
\sloppy On doit montrer que pour tout $a\in C_c(G,A)$ et pour tout $g\in G$, les opérateurs,
$$[a,\mathcal{C}(\tau,T)],\quad a(1-(\mathcal{C}(\tau,T))^2)\quad\hbox{et}\quad a(g(\mathcal{C}(\tau,T))-\mathcal{C}(\tau,T))$$ sont des opérateurs compacts de $\mathcal{C}(\tau)$.

Soit $S=(S_g)_{g\in G}\in C_c(G,\mathcal{K}(E))$ et soit $\widehat{S}_{\rho}$ (resp. $\widehat{S}_{\rho\oplus\rho'}$) l'élément de $\mathcal{L}_{B\G}(E\G)$ (resp. de $\mathcal{L}_{B^{\rho\oplus\rho'}\rtimes G}(E^{\rho\oplus\rho'}\rtimes G)$) associé à $S$ par la définition \ref{Stilda} appliquée à la représentation $\rho$ (resp. $\rho\oplus\rho'$).
On a alors une application $$\psi:C_c(G,\mathcal{K}(E))\rightarrow\mathcal{L}(\mathcal{C}(\tau))$$ définie par la formule
$$\psi(S)^>(h,e\otimes b)=\Big(t\mapsto\widehat{S}^>_{\rho}(h(t)),\widehat{S}^>_{\rho\oplus\rho'}(e)\otimes b\Big),$$
où $S\in C_c(G,\mathcal{K}(E))$ et $(h,e\otimes b)\in\mathcal{C}(\tau)^>$,
et où $\psi(S)^<$ est défini sur $\mathcal{C}(\tau)^<$ de façon analogue.

\begin{Lemma}\label{imagecompacte2}
 L'application $\psi$ induit un morphisme d'algèbres de Banach de $\mathcal{K}(E)\rtimes^{\rho\oplus\rho'}G$ dans $\mathcal{L}(\mathcal{C}(\tau))$, que l'on note $\psi$ par abus de notation et dont l'image est contenue dans $\mathcal{K}(\mathcal{C}(\tau))$.
\end{Lemma}
Avant de démontrer le lemme \label{imagecompacte2}, remarquons que le lemme suivant implique le lemme \ref{presquehomot}
\begin{Lemma}\label{dansimage2}
 Les opérateurs $$[a,\mathcal{C}(\tau,T)],\quad a(1-(\mathcal{C}(\tau,T))^2)\quad\text{et}\quad a(g(\mathcal{C}(\tau,T))-\mathcal{C}(\tau,T))$$
appartiennent à l'image de $\psi$.
\end{Lemma}

\begin{proof}[Démonstration du lemme \ref{dansimage2}]
Si on pose, pour tout $a\in C_c(G,A)$, pour tout $g\in G$ et pour tout $g_1\in G$,
\begin{align*}
&S_1(g_1):=a(g_1)(g_1(T)-T)+[a(g_1),T]),&\\
&S_2(g_1):=a(g_1)g_1(1-T^{2})),&\\
\hbox{et}\quad&S_3(g_1):=a(g_1)g_1((gT)-T)),&
\end{align*}
de sorte que,
\begin{align*}
&\widehat{S_1}=[a,T\G],&\,\,\,\,\,\,\,\,\,\,\,\,\,\,\,\,\,\,&\\
&\widehat{S_2}=a(1-(T\G)^2),&&\\
\hbox{et}\quad&\widehat{S_3}=a(g(T\G)-T\G),&&
\end{align*}
 (voir lemme \ref{opS}), alors, par des calculs simples, on obtient,
\begin{align*}
&\psi(S_1)=[a,\mathcal{C}(\tau,T)],\\
&\psi(S_2)=a(1-(\mathcal{C}(\tau,T))^2),\\
\hbox{et}\quad&\psi(S_3)=a(g(\mathcal{C}(\tau,T))-\mathcal{C}(\tau,T)).
\end{align*}
\end{proof}
Il est clair que les lemmes \ref{imagecompacte2} et \ref{dansimage2} impliquent le lemme \ref{presquehomot} car pour tout $a\in C_c(G,A)$ et pour tout $g\in G$, les opérateurs $[a,\mathcal{C}(\tau,T)]$, $a(1-(\mathcal{C}(\tau,T))^2)$ et $a(g(\mathcal{C}(\tau,T))-\mathcal{C}(\tau,T))$ sont contenus dans l'image de $\psi$ qui, d'après le lemme \ref{imagecompacte2}, est contenue dans l'algèbre des opérateurs compacts de $\mathcal{C}(\tau)$. Ceci implique donc que $(\mathcal{C}(\tau),\mathcal{C}(\tau,T))$ appartient à $E^{\mathrm{ban}}\big(A\rtimes^{\rho\oplus\rho'}G,(B\G)[0,1]\big)$, ce qui termine la démonstration du lemme \ref{presquehomot}.
\end{proof}
Il est clair que $(\mathcal{C}(\tau),\mathcal{C}(\tau,T))$ réalise alors une homotopie entre $i^*_{\rho}(j_{\rho}(\alpha))$ et $i_{\rho,*}(j_{\rho\oplus\rho'}(\alpha))$, et ceci termine la démonstration du lemme \ref{homotopiedescente}.
\end{proof}

\begin{proof}[Démonstration du lemme \ref{imagecompacte2}]
Soit $$(h,e\otimes b)\in C_c(G,E)\times (C_c(G,E)\otimes C_c(G,B)).$$ Alors,
\begin{align*}
\|\psi(S)^>(h,e&\otimes b)\|_{\mathcal{C}(\tau)^>}\\
&=\max\Big(\sup\limits_{t\in [0,1]}\|\widehat{S}^>_{\rho}(h(t))\|_{E\G},\|\widehat{S}^>_{\rho\oplus\rho'}(e)\otimes b\|_{i_{\rho,*}(j_{\rho\oplus\rho'}(E))^>}\Big),
\end{align*}
et donc,
\begin{align*}
\|\psi(S)\|_{\mathcal{L}(\mathcal{C}(\tau))}&\leq\max\Big(\|\widehat{S}_{\rho}\|_{\mathcal{L}(E\G)},\|\widehat{S}_{\rho\oplus\rho'}\|_{\mathcal{L}(E\rtimes^{\rho\oplus\rho'}G)}\Big), \\
&\leq \int_G\|S_g\|\|(\rho\oplus\rho')(g)\|_{\mathrm{End(V\oplus V')}}dg.
\end{align*}
L'application $\psi$ définit alors un morphisme d'algèbres de Banach de l'algèbre $\mathcal{K}(E)\rtimes^{\rho\oplus\rho'}G$ dans $\mathcal{L}(\mathcal{C}(\tau))$, car $$\|\widehat{S}_{\rho\oplus\rho'}\|_{\mathcal{L}(E\rtimes^{\rho\oplus\rho'}G)}=\|S\|_{\mathcal{K}(E)\rtimes^{\rho\oplus\rho'}G},$$ (voir la démonstration du lemme \ref{compacts}).\\
On va maintenant montrer que l'image de $\psi$ est contenue dans l'algèbre des opérateurs compacts de $\mathcal{C}(\tau)$. Soit $S=(S_g)_{g\in G}\in C_c(G,\mathcal{K}(E))$ et soit $\epsilon>0$. D'après le lemme \ref{compacts}, il existe $n\in\NN$ et pour $i=1,..,n$, il existe des éléments $y_i\in C_c(G,E)$ et $\xi_i\in C_c(G,\overline{E})$ tels que, l'élément $K=(K_g)_{g\in G}$ de $C_c(G,\mathcal{K}(E))$ définit par la formule,
$$K_g=\int_G\sum\limits_{i=1}^{n}|y_i(g_1)\rangle\langle g(\xi_i(g_1^{-1}g))|dt,$$
vérifie l'inégalité,
$$\int_G\|S_g-K_g\|_{\mathcal{K}(E)}\|(\rho\oplus\rho')(g)\|_{\mathrm{End}(V\oplus V')}dg<\epsilon.$$
Ceci implique
\begin{align*}
\|\psi(S)-\psi(K)\|_{\mathcal{L}(\mathcal{C}(\tau))}&\leq \int_G\|S_g-K_g\|\|(\rho\oplus\rho')(g)\|_{\mathrm{End}(V\oplus V')}dg,\\
&\leq\epsilon.
\end{align*}
Mais, $\psi(K)$ appartient à $\mathcal{K}(\mathcal{C}(\tau))$, car
\begin{align*}
 \psi(K)=&\Big(\sum\limits^{n}_{i=1}|t\mapsto y_i\rangle\langle t\mapsto\xi_i|,\sum\limits^n_{i=1}|g\mapsto y_i(g)\otimes 1\rangle\langle g\mapsto 1\otimes\xi_i(g)|\Big),\\
=& \sum\limits^{n}_{i=1}\Big|\big(t\mapsto y_i,g\mapsto y_i(g)\otimes 1\big)\Big\rangle\Big\langle \big(t\mapsto\xi_i, g\mapsto 1\otimes\xi_i(g)\big)\Big|,
\end{align*}
et donc $\psi(S)$ est approché par des sommes finies d'opérateurs de rang $1$ ce qui implique que $\psi(S)$ appartient à $\mathcal{K}(\mathcal{C}(\tau))$.
\end{proof}

%Posons $\rho'=1_G$. Le lemme dit alors que
%$i_{1_G,*}\circ\mu_{\rho\oplus 1_G}^B=\mu^B$, où $\mu^B$ est le
%morphisme de Baum-Connes usuel qui est un isomorphisme car $B$ et
%une $G$-algèbre propre \cite{Chabert, Echterhoff et Meyer}.

\section{Groupes admettant un élément $\gamma$ de Kasparov}\label{dirac}
Nous allons maintenant montrer que les morphismes de Baum-Connes tordus, maximal et réduit, sont des isomorphismes pour tout groupe $G$ admettant un élément $\gamma$ de Kasparov égal à $1$ dans $KK_G(\CC,\CC)$. Pour ceci, nous allons d'abord montrer que les morphismes tordus à coefficients dans une algèbre propre sont toujours des isomorphismes.
\subsection{Coefficients dans une algèbre propre}
On rappelle qu'une $G$-$C^*$-algèbre $B$ est propre s'il existe un $G$-espace propre $Z$ tel que $B$ soit une $C_0(Z)$-$G$-$C^{*}$-algèbre au sens de Kasparov \cite[1.5]{Kasparov88} (c'est-à-dire qu'il existe un morphisme $\Theta$
de $C_0(Z)$ dans le centre de l'algèbre $M(B)$ des
multiplicateurs de $B$, $G$-équivariant et tel que $\Theta(C_0(Z))B=B$). Par abus de notation, si $B$ est une $G$-$C^*$-algèbre propre on identifie $C_0(Z)$ à son image dans le centre de $M(B)$. De façon équivalente, $B$ est une $G$-$C^*$-algèbre propre si et seulement si, il existe un
$G$-espace propre $Z$ tel que $B$ soit munie d'une action du
groupoïde $Z\rtimes G$. On renvoie le lecteur à \cite{LeGall97} pour la définition de l'action d'un groupoïde sur une $C^*$-algèbre. On rappelle tout de même, que si $Z$ est un $G$-espace, on note $Z\rtimes G$ le groupoïde dont l'ensemble des unités est $Z$, l'ensemble des flèches est le produit cartésien $Z\times G$ et les applications source et but sont données, respectivement, par les applications suivantes
$$s:(z,g)\mapsto g.z\quad\hbox{et}\quad r:(z,g)\mapsto z.$$
On note alors $(Z\rtimes G)^{(2)}:=(Z\rtimes G)^{(1)}\times_{(Z\rtimes G)^{(0)}}(Z\rtimes
G)^{(1)}$ l'ensemble des éléments composables de $Z\rtimes G$.\\
Si $B$ est une $C_0(Z)$-$G$-$C^{*}$-algèbre, on note $r^{*}(B)$ la $C_0(Z\rtimes G)$-$G$-$C^{*}$-algèbre obtenue comme image réciproque de $B$ par l'application $r$.\\
On rappelle aussi que si $B$ est une
$G$-$C^*$-algèbre propre, alors $$C^*(G,B)=C^*_r(G,B)$$ (cf. \cite[page 184]{Kasparov-Skandalis03}, \cite[page 192]{Higson-Guentner}). Nous allons montrer le théorème suivant
\begin{Theo}\label{propre}
 Si $B$ est une $G$-$C^*$-algèbre propre, alors $\mu_{\rho}^{B}$ est un isomorphisme.
\end{Theo}
\begin{Rem}\label{casréduit}
 On remarque que si $B$ est une $G$-$C^*$-algèbre propre, alors $$B\G=B\rG,$$car $C^*(G,B)=C^*_r(G,B)$, donc le théorème \ref{propre} implique que le morphisme de Baum-Connes tordu réduit à coefficients dans une algèbre propre est aussi un isomorphisme.
\end{Rem}

\begin{proof}[Démonstration du théorème \ref{propre}]
On a le lemme suivant
\begin{Lemma}\label{K-th}
 Si $B$ est une $G$-$C^*$-algèbre propre, les morphismes $$i_{\rho}:B\rtimes^{\rho\oplus 1_G}G\rightarrow B\rtimes^{\rho}G\quad\hbox{et}\quad i_{1_G}:B\rtimes^{\rho\oplus 1_G}G\rightarrow C^*(G,B),$$où l'on note $1_G$ la représentation triviale de $G$, induisent des isomorphismes en $K$-théorie.
\end{Lemma}

Le lemme \label{K-th} implique le théorème \ref{propre}. En effet, d'après le lemme
\ref{commutatifs}, $i_{1_G,*}\circ\mu_{\rho\oplus 1_G}^B=\mu^B$,
où $\mu_B$ est un isomorphisme car c'est le morphisme de
Baum-Connes usuel à valeurs dans une algèbre propre \cite{Chabert-Echterhoff-Meyer}. On en déduit que $\mu^B_{\rho\oplus 1_G}$ est
un isomorphisme. Comme d'autre part,
$i_{\rho,*}\circ\mu_{\rho\oplus 1_G}^B=\mu^B_{\rho}$ et que
$i_{\rho,*}$ est aussi un isomorphisme, on en déduit que le
morphisme de Baum-Connes tordu par $\rho$ à coefficients dans une
algèbre propre, $\mu^B_{\rho}$, est un isomorphisme.

\end{proof}

Pour montrer le lemme \ref{K-th}, on va utiliser un résultat de Lafforgue (cf. \cite[Lemme 1.7.8]{Lafforgue02}. On rappelle qu'une sous-algèbre $D$ d'une algèbre $A$ est dite héréditaire dans $A$ si $DAD\subset D$. Pour nous, l'intérêt de cette notion est donnée par le lemme suivant démontré dans  \cite[Lemme 1.7.10]{Lafforgue02}
\begin{Lemma}
 Si $C$ est une algèbre de Banach, $B$ est une sous-algèbre de Banach dense de $C$ et s'il existe une sous-algèbre dense de $B$ qui est héréditaire dans $C$, alors $B$ et $C$ ont la même $K$-théorie.
\end{Lemma}
De plus, Lafforgue a montré le lemme suivant (cf. \cite[Lemme 1.7.8]{Lafforgue02})
\begin{Lemma}
Soit $Z$ un $G$-espace propre tel que $Z\rtimes G$ agisse sur $B$
et soient $s,r:Z\rtimes G\rightarrow Z$ les applications source et but, respectivement, de $Z\rtimes
G$. Soit $B_c$ la sous-algèbre de $B$ formée des éléments $b$ de
$B$ tels que $fb=b$ pour un certain $f\in C_c(Z)$. Alors
$D=C_c(G,B_c)$, l'algèbre des sections continues à support compact dans $Z\rtimes G$
de $r^*(B)$, est une sous-algèbre héréditaire de $C^*(G,B)$.
\end{Lemma}
Nous allons donner ici la démonstration de Lafforgue avec plus de détails par souci de commodité pour le lecteur.
\begin{proof}
On rappelle que l'on note tout élément $f$ de $C_c(G,B)$ par l'intégrale formelle $\int_Gf(g)e_gdg$ et que $dg^{-1}=\Delta(g^{-1})dg$. Soient $f_1,f_3$ des éléments de $C_c(G,B)$. L'application
\begin{align*}
C_c(G,B)&\rightarrow C_c(G,B)\\
f_2&\mapsto f_1*f_2*f_3,
\end{align*}
se prolonge par continuité en une application de $C^*_r(G,B)$ dans l'espace des fonctions $f$ continues sur $G$ à valeurs dans $B$ qui vérifient la condition suivante: il existe une constante $C$ telle que pour tout $g\in G$,
$$\|f(g)\|_B\Delta(g)^{\frac{1}{2}}<C.$$ En effet, soit $\mathrm{L}^2(G,B)$ muni de la structure de $B$-module hilbertien donnée par la formule: $$\langle f,f'\rangle_{B}=\int_Gf(t)^*f'(t)dt,$$
pour tout $f,f'\in\mathrm{L}^2(G,B)$. On note tout élément de $L^2(G,B)$ par l'intégrale formelle $\int_Ge_gf(g)dg$ de sorte que $B$ agisse à droite sur $L^2(G,B)$. Avec ces conventions, l'application de $C_c(G,B)$ dans $L^2(G,B)$ envoie $\int_Gf(g)e_gdg$ dans $\int_Ge_gf(g)dg$, donc il faut faire attention avec les formules.\\
Si $f=\int_Ge_gf(g)dg$, on a $f^*=\int_Gf(g)^*e_{g^{-1}}dg$ et donc
\begin{align*}
(f{^*}*f')&=(\int_Gf(g)^*e_{g^{-1}}dg)(\int_Ge_gf'(g)dg),\\
&=\int_{G\times G}f(g)^*tf'(gt)dge_tdt.
\end{align*}
Si on note $1$ l'identité de $G$, ceci implique que $f^**f'(1)=\langle f,f'\rangle_{B}$. Donc, pour tout $f,f'\in C_c(G,B)$ et pour tout $g\in G$,
\begin{align*}
\|f*f'(g)\|_B&=\|f*(f'e_{g^{-1}})(1)\|_B,\\
&=\|\langle f^*,(f'e_{g^{-1}})\rangle_{\mathrm{L}^2(G,B)}\|_B,\\
&\leq\|f^*\|_{\mathrm{L}^2(G,B)}\|f'e_{g^{-1}}\|_{\mathrm{L}^2(G,B)}.
\end{align*}
Or,
\begin{align*}
\|f'e_{g^{-1}}\|_{\mathrm{L}^2(G,B)}&=\Big\|\int_Gf'(tg)^*f'(tg)dt\Big\|_B^{\frac{1}{2}},\\
&=\Big\|\int_Gf'(t)^*f(t)\Delta(g^{-1})dt\Big\|_B^{\frac{1}{2}},\\
&\leq\|f'\|_{L^2(G,B)}\Delta(g)^{-\frac{1}{2}}.
\end{align*}
Ceci implique que, pour $f_1,f_3,f_2\in C_c(G,B)$ et $g\in G$, on a les inégalités suivantes
\begin{align*}
\|f_1*f_2*f_3(g)\|_B&\leq\Delta(g)^{-\frac{1}{2}}\|f_1^*\|_{L^2(G,B)}\|f_2*f_3\|_{L^2(G,B)},\\
&\leq\Delta(g)^{-\frac{1}{2}}\|f_1^*\|_{L^2(G,B)}\|\lambda_{(G,B)}(f_2)f_3\|_{L^2(G,B)},\\
&\leq\Delta(g)^{-\frac{1}{2}}\|f_1^*\|_{L^2(G,B)}\|f_2\|_{C^*_r(G,B)}\|f_3\|_{L^{2}(G,B)},
\end{align*}
où on note $\lambda_{G,B}$ la représentation régulière de $C_r^*(G,B)$ dans $\mathcal{L}(\mathrm{L}^2(G,B))$ de sorte que $\|\lambda_{(G,B)}(f_2)\|_{\mathcal{L}(L^{2}(G,B))}=\|f_2\|_{C^*_r(G,B)}$. On a donc
$$\|f_1*f_2*f_3(g)\|_B\Delta(g)^{\frac{1}{2}}\leq\|f_1^*\|_{L^2(G,B)}\|f_2\|_{C^*_r(G,B)}\|f_3\|_{L^{2}(G,B)},$$
ce qu'on voulait démontrer.\\

Maintenant, si $f_1,f_3$ appartiennent à $C_c(G,B_c)$, ce sont des sections
continues à support compact sur $Z\rtimes G$ de $r^*(B)$, vu
comme champ continu d'algèbres au-dessus de $Z\rtimes G$; donc $r(\mathrm{supp}_{Z\rtimes G}(f_1))$ et $s(\mathrm{supp}_{Z\rtimes G}(f_3))$ sont des sous-ensembles compacts de $Z\rtimes G$. De plus, comme $Z$ est un espace $G$-propre, l'application $$(r,s):Z\rtimes G\rightarrow Z\times Z,$$ est propre et donc le sous-ensemble de $Z\rtimes G$
$$K:=\{(z,h)| r(z,h)\in r(\mathrm{supp}_{Z\rtimes G}(f_1)),s(z,h)\in s(\mathrm{supp}_{Z\rtimes G}(f_3))\}$$ est compact.\\
Soit $\phi$ l'application qui à deux éléments composables de $Z\rtimes G$ associe leur composée:
\begin{align*}
\phi:(Z\rtimes G)^{(2)}&\rightarrow(Z\rtimes G)^{(1)}\\
\{((z',h),(z,g))|z'=gz\}&\mapsto(z,hg).
\end{align*}
Si on définit un produit $*$ entre les parties de $Z\rtimes G$ de la
manière suivante: pour $X, Y\subset Z\rtimes G$,
$$X*Y:=\phi(X\times_{(Z\rtimes G)^{(0)}}Y),$$ alors le support du produit d'éléments de $C_c(G,B)$ est contenue dans le produit des supports. On a alors que, pour $f_2\in C_c(G,B)$,
$$\mathrm{supp}_{Z\rtimes
G}(f_1*f_2*f_3)\subset\mathrm{supp}_{Z\rtimes
G}(f_1)*\mathrm{supp}_{Z\rtimes G}(f_2)*\mathrm{supp}_{Z\rtimes
G}(f_3).$$
Or, $\mathrm{supp}_{Z\rtimes
G}(f_1)*\mathrm{supp}_{Z\rtimes G}(f_2)*\mathrm{supp}_{Z\rtimes
G}(f_3)\subset K$, car:
\begin{align*}
\mathrm{supp}_{Z\rtimes
G}(f_1)*\mathrm{supp}&_{Z\rtimes G}(f_2)*\mathrm{supp}_{Z\rtimes
G}(f_3)\\
&=\{(z,g)\in Z\rtimes
G|g=g_1g_2g_3\,\text{avec\,}\,g_1,g_2,g_3\in G,\\
&\,\,\,\,\,\,\,\,\,\,\,(z,g_3)\in\mathrm{supp}_{Z\rtimes
G}(f_3),
(g_3z,g_2)\in\mathrm{supp}_{Z\rtimes G}(f_2),\\
&\,\,\,\,\,\,\,\,\,\,\,(g_2g_3z,g_1)\in\mathrm{supp}_{Z\rtimes
G}(f_1)\}.
\end{align*}
donc le support de $f_1*f_2*f_3$ est inclus dans un sous-ensemble compact de $Z\rtimes G$ qui ne dépend que de $f_1$ et $f_3$. On en déduit que pour $f_1,f_3\in C_c(G,B_c)$ l'application $f_2\mapsto f_1*f_2*f_3$ a pour image $C_c(G,B_c)$, car sur le support de $f_1*f_2*f_3$ la fonction $g\mapsto\Delta(g)$ est alors bornée. Ceci implique que $C_c(G,B_c)$ est une sous-algèbre héréditaire de $C_r^*(G,B)$.
Comme, de plus, $C_r^*(G,B)$ est égal à $C^*(G,B)$ car $B$ est
propre, on en déduit que $C_c(G,B_c)$ est une algèbre héréditaire
de $C^*(G,B)$.
\end{proof}

\begin{proof}[Démonstration du lemme \ref{K-th}]
En gardant les notations du lemme précédent, l'algèbre
$D$ est une sous-algèbre dense de $B\rtimes^{\rho\oplus 1_G}G$ car
$B_c$ est dense dans $B=C_0(Z)B$. De plus, comme $D$ est une
sous-algèbre héréditaire de $C^*(G,B)$, alors
$D\otimes\mathrm{End}(V)$ est héréditaire dans
$C^*(G,B)\otimes\mathrm{End}(V)$ et comme
$D\otimes\mathrm{End}(V)\cap B\rtimes^{\rho}G=D$ alors $D$ est une
sous-algèbre héréditaire de $B\rtimes^{\rho}G$.\\

\sloppy En appliquant les lemmes \cite[Lemme 1.7.9 et Lemme
1.7.10]{Lafforgue02}, on obtient alors que
\begin{align*}
&i_{\rho,*}:K(B\rtimes^{\rho\oplus 1_G}G)\rightarrow K(B\G),\\
\hbox{et}\quad& i_{1_G,*}:K(B\rtimes^{\rho\oplus 1_G}G)\rightarrow K(C^*(G,B))
\end{align*}
sont des isomorphismes.
\end{proof}

\begin{Rem}
D'après le lemme précédent, si $B$ est une $G$-$C^*$-algèbre propre, l'application
$i_{\rho,*}\circ i_{1_G,*}^{-1}$ de $K(C^*(G,B))$ dans $K(B\G)$ est un isomorphisme pour tout groupe localement compact.
\end{Rem}

\subsection{{\'E}lément $\gamma$ de Kasparov}
On va maintenant utiliser le résultat obtenu pour les algèbres propres pour montrer que le morphisme de Baum-Connes tordu par n'importe quelle représentation de dimension finie de $G$ et à coefficient dans une $G$-$C^*$-algèbre quelconque, est un isomorphisme pour tout groupe localement compact $G$ qui admet un élément $\gamma$ de Kasparov égal à $1$ dans $KK_G(\CC,\CC)$. Le résultat est donné par le théorème suivant

\begin{Theo}\label{gamma=1}
\sloppy Soit $G$ un groupe localement compact tel que il existe une $G$-$C^*$-algèbre propre $A$, et des éléments $\eta\in KK_G(\CC,A)$ et $d\in KK_G(A,\CC)$
 tels que, si on pose $\gamma:=\eta\otimes_A d\in KK_G(\CC,\CC)$ on a $\gamma=1$. Soit $B$ une $G$-$C^*$-algèbre. Alors, pour toute
 représentation $\rho$ de dimension finie de $G$, $\mu_{\rho}^B$ et $\mu^B_{\rho,r}$ sont des isomorphismes.

\end{Theo}
\begin{proof}
Si $A,B,D$ sont des $G$-$C^*$-algèbres, on note $\sigma_D$ le
morphisme de $KK_G(A,B)$ dans $KK_G(A\otimes D,B\otimes D)$ défini
dans \cite{Kasparov88}.\\
Soient $A$, $\eta$ et $d$ vérifiant les hypothèses du théorème.
L'injectivité et la surjectivité découlent de
la commutativité du diagramme suivant
\setcounter{equation}{0}
\begin{equation}\label{BC}
\xymatrix{
    K^{\mathrm{top}}(G,B)\ar[d]_{\mu^B_{\rho}}\ar[rr]^{\sigma_B(\eta)_*} &&K^{\mathrm{top}}(G,A\otimes B)\ar[rr]^{\sigma_B(d)_*}\ar[d]^{\mu^{B\otimes A}_{\rho}}&&K^{\mathrm{top}}(G,B)\ar[d]_{\mu^B_{\rho}}\\
K(B\G)\ar[rr]_{\Sigma(j_{\rho}(\sigma_B(\eta)))}&&K(A\otimes B\G)\ar[rr]_{\Sigma(j_{\rho}(\sigma_B(d)))}&&K(B\G).\\
}
\end{equation}
On va démontrer d'abord la surjectivité. Supposons qu'il existe
$\gamma\in KK_G(\CC,\CC)$ vérifiant les hypothèses et tel que
$\gamma=1$ dans $KK_G(\CC,\CC)$.\\

Le fait que $\gamma$ soit égal à $1$ implique que
$\Sigma(j_{\rho}(\sigma_B(\gamma)))=\mathrm{Id}_{K(B\G)}$. Or,
$\gamma=\eta\otimes_A d$, donc
$\sigma_B(\gamma)=\sigma_B(\eta)\otimes_A\sigma_B(d)$ et donc
\begin{align*}
\Sigma(j_{\rho}(\sigma_B(\gamma)))&=\Sigma(j_{\rho}(\sigma_B(\eta)\otimes_{A\otimes B}\sigma_B(d))),\\
&=\Sigma(j_{\rho}(\sigma_B(d)))\circ\Sigma(j_{\rho}(\sigma_B(\eta))),
\end{align*}
ce qui implique que $\Sigma(j_{\rho}(\sigma_B(d)))$ est
surjectif.\\
D'autre part,
$$\Sigma(j_{\rho}(\sigma_B(d)))\circ\mu_{\rho}^{A\otimes
B}=\mu^{B}_{\rho}\circ\sigma_{B}(d)_*,$$ et comme $A\otimes
B$ est une algèbre propre car $A$ est propre,
$\mu_{\rho}^{A\otimes B}$ est un isomorphisme et ceci implique que
$\mu_{\rho}^B$ est surjectif.\\

\sloppy Montrons maintenant l'injectivité.
Soit $x\in K^{\mathrm{top}}(G,B)$ tel
que $\mu_{\rho}^B(x)=0$. On a alors
\begin{align*}
 \mu_{\rho}^{A\otimes B}(\sigma_B(\eta)_*(x))&=\Sigma(j_{\rho}(\sigma_B(\eta)))(\mu^B_{\rho}(x)),\\
&=0,
\end{align*}
ce qui implique que $\sigma_B(\eta)_*(x)=0$ car
$\mu_{\rho}^{A\otimes B}$ est un isomorphisme. Mais
$\sigma_B(\gamma)=\sigma_B(\eta)\otimes_{A\otimes B}\sigma_B(d)$,
donc $\sigma_B(\gamma)_*(x)=0$. De plus, le fait que $\gamma=1$
implique que $\sigma_B(\gamma)=1$ et donc que
$\sigma_B(\gamma)_*=\mathrm{Id}_{K^{\mathrm{top}}(G,B)}$. Ceci implique que
$x=0$.\\
Les remarques \ref{casréduit1} et \ref{casréduit} impliquent que la même démonstration est valable dans le cas du morphisme tordu réduit.

\end{proof}
Plus généralement, le diagramme (\ref{BC}) permet aussi de montrer que l'existence d'un élément $\gamma$ de Kasparov implique l'injectivité du morphisme de Baum-Connes tordu. C'est le théorème suivant
\begin{Theo}
\sloppy Supposons que pour toute partie $G$-compacte $Y$ de $\underline{E}G$, il existe une $G$-$C^*$-algèbre propre $A$ et des éléments $\eta\in KK_G(\CC,A)$ et $d\in KK_G(A,\CC)$ tels que $\gamma=\eta\otimes_A d\in KK_G(\CC,\CC)$ vérifie $p^*(\gamma)=1$ dans $KK_{G\ltimes Y}(C_0(Y),C_0(Y))$, où $p$ est la projection de $Y$ vers le point. Alors, pour toute représentation $\rho$ et pour toute $G$-$C^*$-algèbre $B$, les morphismes $\mu_{\rho}^B$ et $\mu^B_{\rho,r}$ sont sont injectifs.
\end{Theo}
\begin{proof}
 Soit $x$ un élément de $K^{\mathrm{top}}(G)$ tel que $\mu_{\rho}^B(x)=0$. Soit $Y$ une partie $G$-compacte de $\underline{E}G$ telle que $x\in KK_G(C_0(Y),B)$ et soient $A$, $\eta$, $d$ et $\gamma$ vérifiant les hypothèses du théorème. On va montrer que $x=0$. La commutativité du diagramme (\ref{BC}) implique que $\sigma_B(\eta)_*(x)=0$ car
\begin{align*}
 \mu_{\rho}^{A\otimes B}(\sigma_B(\eta)_*(x))&=\Sigma(j_{\rho}(\sigma_B(\eta)))(\mu^B_{\rho}(x)),\\
&=0,
\end{align*}
et $\mu_{\rho}^{A\otimes B}$ est un isomorphisme (car $A\otimes B$ est une algèbre propre). Mais $\sigma_B(\eta)_*(x)=0$ implique que $\sigma_B(\gamma)_*(x)=0$, car $\sigma_B(\gamma)=\sigma_B(\eta)\otimes_{A\otimes B}\sigma_B(d)$. \\
 D'autre part, l'égalité $p^*(\gamma)=1$ dans $KK_{G\ltimes Y}(C_0(Y),C_0(Y))$ implique que $\sigma_{C_0(Y)}(\gamma)^{*}x=x$. Or, comme $\gamma\in KK_G(\CC,\CC)$, on a
\begin{align*}
\sigma_{C_0(Y)}(\gamma)\otimes_{C_0(Y)}x&=x\otimes_{B}\sigma_B(\gamma).
\end{align*}
Ceci implique que $\sigma_B(\gamma)_*x=x$ et donc que $x=0$. La même démonstration est valable dans le cas du morphisme tordu réduit.
\end{proof}

\bibliographystyle{amsalpha}
\bibliography{bibliographie}

\def\cprime{$'$} \def\cprime{$'$}
\providecommand{\bysame}{\leavevmode\hbox to3em{\hrulefill}\thinspace}
\providecommand{\MR}{\relax\ifhmode\unskip\space\fi MR }
% \MRhref is called by the amsart/book/proc definition of \MR.
\providecommand{\MRhref}[2]{%
  \href{http://www.ams.org/mathscinet-getitem?mr=#1}{#2}
}
\providecommand{\href}[2]{#2}
\begin{thebibliography}{CEM01}

\bibitem[BCH94]{Baum-Connes-Higson}
P.~Baum, A.~Connes, and N.~Higson, \emph{Classifying space for proper actions
  and {$K$}-theory of group {$C\sp \ast$}-algebras}, $C\sp \ast$-algebras:
  1943--1993 (San Antonio, TX, 1993), Contemp. Math., vol. 167, Amer. Math.
  Soc., Providence, RI, 1994, pp.~240--291.

\bibitem[Bos90]{Bost90}
J.~B. Bost, \emph{Principe d'{O}ka, {$K$}-th\'eorie et systèmes dynamiques non
  commutatifs}, Invent. Math. \textbf{101} (1990), 261--333.

\bibitem[CEM01]{Chabert-Echterhoff-Meyer}
J.~Chabert, S.~Echterhoff, and R.~Meyer, \emph{Deux remarques sur l'application
  de {B}aum-{C}onnes}, C. R. Acad. Sci. Paris S\'er. I Math. \textbf{332}
  (2001), no.~7, 607--610.

\bibitem[Cha03]{Chatterji03}
I.~Chatterji, \emph{Property ({RD}) for cocompact lattices in a finite product
  of rank one {L}ie groups with some rank two {L}ie groups}, Geom. Dedicata
  \textbf{96} (2003), 161--177.

\bibitem[GA07a]{GomezThese}
M.~P. Gomez-Aparicio, \emph{Propriéte {$(T)$} tordue et morphisme de
  {B}aum-{C}onnes tordus par une représentation non-unitaire}, Ph.D. thesis,
  Université de Paris VII, décembre 2007.

\bibitem[GA07b]{Gomez07}
\bysame, \emph{Sur la propriété {$(T)$} tordue par un produit tensoriel}, J.
  Lie Theory \textbf{17} (2007), 505--524.

\bibitem[GA08]{Gomez08-2}
\bysame, \emph{Représentations non-unitaires, morphisme de {B}aum-{C}onnes
  tordu et complétions inconditionnelles}, preprint, january 2008.

\bibitem[HG04]{Higson-Guentner}
N.~Higson and E.~Guentner, \emph{Group {$C\sp \ast$}-algebras and
  {$K$}-theory}, Noncommutative geometry, Lecture Notes in Math., vol. 1831,
  Springer, Berlin, 2004, pp.~137--251.

\bibitem[HK01]{Higson-Kasparov}
N.~Higson and G.~G. Kasparov, \emph{{$E$}-theory and {$KK$}-theory for groups
  which act properly and isometrically on {H}ilbert space}, Invent. Math.
  \textbf{144} (2001), no.~1, 23--74.

\bibitem[HLS02]{Higson-Lafforgue-Skandalis}
N.~Higson, V.~Lafforgue, and G.~Skandalis, \emph{Counterexamples to the
  {B}aum-{C}onnes conjecture}, Geom. Funct. Anal. \textbf{12} (2002), no.~2,
  330--354.

\bibitem[JK95]{Julg-Kasparov}
P.~Julg and G.~Kasparov, \emph{Operator {$K$}-theory for the group
  ${SU}(n,1)$}, J. Reine Angew. Math. \textbf{463} (1995), 99--152 (English).

\bibitem[Jul97]{Julg97}
P.~Julg, \emph{Remarks on the {B}aum-{C}onnes conjecture and {K}azhdan's
  property {$T$}}, Operator algebras and their applications (Waterloo, ON,
  1994/1995), Fields Inst. Commun., vol.~13, Amer. Math. Soc., Providence, RI,
  1997, pp.~145--153.

\bibitem[Kas88]{Kasparov88}
G.~G. Kasparov, \emph{Equivariant {$KK$}-theory and the {N}ovikov conjecture},
  Invent. Math. \textbf{91} (1988), no.~1, 147--201.

\bibitem[Kaz67]{Kazhdan}
D.~A. Kazhdan, \emph{On the connection of the dual space of a group with the
  structure of its closed subgroups}, Funkcional. Anal. i Prilo\v zen.
  \textbf{1} (1967), 71--74.

\bibitem[KS91]{Kasparov-Skandalis91}
G.~G. Kasparov and G.~Skandalis, \emph{Groups acting on buildings, operator
  {$K$}-theory, and {N}ovikov's conjecture}, $K$-Theory \textbf{4} (1991),
  no.~4, 303--337.

\bibitem[KS03]{Kasparov-Skandalis03}
\bysame, \emph{Groups acting properly on ``bolic'' spaces and the {N}ovikov
  conjecture}, Ann. of Math. (2) \textbf{158} (2003), no.~1, 165--206.

\bibitem[Laf00]{Lafforgue00}
V.~Lafforgue, \emph{A proof of property ({RD}) for cocompact lattices of {${\rm
  SL}(3,\bold R)$} and {${\rm SL}(3,\bold C)$}}, J. Lie Theory \textbf{10}
  (2000), no.~2, 255--267.

\bibitem[Laf02a]{LafforgueICM02}
\bysame, \emph{Banach {$KK$}-theory and the {B}aum-{C}onnes conjecture},
  Proceedings of the International Congress of Mathematicians, Vol. II
  (Beijing, 2002) (Beijing), Higher Ed. Press, 2002, pp.~795--812.

\bibitem[Laf02b]{Lafforgue02}
\bysame, \emph{{$K$}-th\'eorie bivariante pour les alg\`ebres de {B}anach et
  conjecture de {B}aum-{C}onnes}, Invent. Math. \textbf{149} (2002), no.~1,
  1--95.

\bibitem[LG97]{LeGall97}
P.~Y. Le~Gall, \emph{Th\'eorie de {K}asparov \'equivariante et groupo\"\i des},
  C. R. Acad. Sci. Paris S\'er. I Math. \textbf{324} (1997), no.~6, 695--698.

\bibitem[Par06]{Paravicini06}
W.~Paravicini, \emph{{$KK$}-theory for {B}anach algebras and proper groupoids},
  Ph.D. thesis, november 2006.

\bibitem[Ska91]{Skandalis91}
G.~Skandalis, \emph{Kasparov's bivariant {$K$}-theory and applications}, Expo.
  Math. \textbf{9} (1991), no.~3, 193--250.

\bibitem[Tu99]{Tu99}
J.~L. Tu, \emph{La conjecture de {N}ovikov pour les feuilletages
  hyperboliques}, $K$-Theory \textbf{16} (1999), no.~2, 129--184.

\bibitem[Tza00]{Tzanev00}
K.~Tzanev, \emph{{$C^*$}-algèbres de {H}ecke et {${\rm K}$}-théorie}, Ph.D.
  thesis, Université de Paris VII, 2000.

\end{thebibliography}

\end{document}